\title[Dynamics on character varieties]{Dynamics on nilpotent character varieties}
\author[J-P. Burelle]{Jean-Philippe Burelle}
\address{D\'epartement de math\'ematiques, Universit\'e de Sherbrooke, Sherbrooke, QC J1K 2R1, Canada}
\email{j-p.burelle@usherbrooke.ca}
\author[S. Lawton]{Sean Lawton}
\address{Department of Mathematical Sciences, George Mason University, 4400 University Drive, Fairfax, Virginia 22030, USA}
\email{slawton3@gmu.edu}
\newcommand{\bZ}{\mathbb{Z}}
\newcommand{\bR}{\mathbb{R}}
\newcommand{\GL}{\mathsf{GL}}
\newcommand{\SL}{\mathsf{SL}}
\newcommand{\Hom}{\mathsf{Hom}}
\newcommand{\Aut}{\mathsf{Aut}}
\newcommand{\Out}{\mathsf{Out}}
\newcommand{\X}{\mathsf{X}}
\newcommand{\SU}{\mathsf{SU}}
\newcommand{\rk}{\mathrm{rank}}
\newcommand{\norm}[1]{\left\lVert#1\right\rVert}
\newtheorem{theorem}{Theorem}[section]
\newtheorem{prop}[theorem]{Proposition}
\newtheorem{lem}[theorem]{Lemma}
\newtheorem{cor}[theorem]{Corollary}
\newtheorem{ex}[theorem]{Example}
\newtheorem{conjecture}[theorem]{Conjecture}
\theoremstyle{definition}
\newtheorem{remark}[theorem]{Remark}
\newtheorem{defn}[theorem]{Definition}
\newtheorem{question}[theorem]{Question}
\subjclass[2020]{Primary 14M35, 22D40, 20F18, 22F50; Secondary 14L30, 37A25 }  
\keywords{character variety, ergodicity, nilpotent group}
\begin{document}

\begin{abstract}
Let $\Hom^{0}(\Gamma,G)$ be the connected component of the identity of the variety of representations of a finitely generated nilpotent group $\Gamma$ into a connected compact Lie group $G$, and let $\X^0(\Gamma,G)$ be the corresponding moduli space. We show that there exists a natural $\Out(\Gamma)$-invariant measure on $\X^0(\Gamma,G)$ and that whenever $\Out(\Gamma)$ has at least one hyperbolic element, the action of $\Out(\Gamma)$ on $\X^0(\Gamma, G)$ is mixing with respect to this measure.
\end{abstract}


\date{\today}

\maketitle

\tableofcontents

\section{Introduction}
Let $\Gamma$ be a finitely presentable group and let $G$ be a connected compact Lie group.  Then, the set of group homomorphisms $\Hom(\Gamma,G)$ is  a topological space, independent of a choice of presentation of $\Gamma$.  One way to see this is to give $\Gamma$ the discrete topology and consider the compact-open topology on the set of continuous functions $C^0(\Gamma, G)$. This endows $\Hom(\Gamma, G)\subset C^0(\Gamma, G)$ with the subspace topology.  The group $\Aut(\Gamma)$ acts on $\Hom(\Gamma, G)$ via $\alpha\cdot \rho=\rho\circ \alpha^{-1}$.  This action descends to an $\Out(\Gamma)$-action on the conjugation orbit space $\X(\Gamma,G):=\Hom(\Gamma, G)/G$ known as the $G$-character variety of $\Gamma$.

When $\Gamma$ is the fundamental group of a closed orientable surface of genus $g\geq 2$, there is an $\Out(\Gamma)$-invariant symplectic measure on $\X(\Gamma, G)$ by Goldman \cite{gold-sym}. In Goldman and Pickrell-Xia \cite{Gold6,PE1}, it was shown that $\Out(\Gamma)$ acts ergodically with respect to this measure.\footnote{Recent work has been announced that the Torelli subgroup is sufficient to establish ergodicity \cite{Bo}.} This theorem was generalized to most ``relative character varieties" by Pickrell-Xia and Goldman-Lawton-Xia \cite{PE2,GLX}, and for some $G$, it also holds for non-orientable surface groups by Palesi \cite{Palesi}.  When $\Gamma$ is a free group of rank 3 or greater, again there is an invariant measure where $\Out(\Gamma)$ is ergodic by Goldman and Gelander \cite{gold-free, gelander}.  Given the above examples, it is natural to ask:

\begin{question}
For what $\Gamma$ does there exist an $\Out(\Gamma)$-invariant measure with full support on $\X(\Gamma, G)$?  If so, does $\Out(\Gamma)$ act ergodically for all connected compact Lie groups $G$? 
\end{question}

In work by Mulase-Penkav \cite{MP} (see also \cite{Palesi}) the first question is explored given a presentation of $\Gamma$ satisfying certain conditions.  In general, $\X(\Gamma,G)$ may have many connected components, and $\Out(\Gamma)$ permutes them, fixing the component containing the trivial representation.  In the case of hyperbolic surface groups and free groups, the moduli space $\X(\Gamma, G)$ is connected and as mentioned above $\Out(\Gamma)$ acts ergodically with respect to measures first considered by Goldman.  As such, we call any $\Gamma$ {\it goldman} if $\Out(\Gamma)$ acts ergodically on the connected component of the identity $\X^0(\Gamma, G)$ with respect to an invariant measure (for any connected compact Lie group $G$). From this point of view, hyperbolic surface groups and free groups of rank greater than 2 are goldman.

It is the purpose of this short note to show that finitely generated abelian groups are goldman, and to prove many but not all finitely generated nilpotent groups are goldman as well (we note that finitely generated nilpotent groups are finitely presentable).  Precisely, we prove:

\begin{theorem}\label{thm:main}
Let $\Gamma$ be a finitely generated group and $G$ a compact connected Lie group.  If $\Gamma$ is nilpotent, then there exists a finite $\Out(\Gamma)$-invariant measure on $\X^0(\Gamma, G)$ with full support. If $\Aut(\Gamma)$ has a hyperbolic element $($in particular, if $\Gamma$ is abelian$)$, then the $\Out(\Gamma)$ action is strongly mixing.
\end{theorem}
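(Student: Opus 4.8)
The plan is to transport all of the dynamics onto a linear automorphism of a torus.

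\medskip

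\emph{Step 1: torus reduction.} For a finitely generated nilpotent $\Gamma$ and a compact connected $G$ with maximal torus $T$ and Weyl group $W=N_G(T)/T$, every representation in $\Hom^{0}(\Gamma,G)$ is conjugate into $T$; a representation into the abelian group $T$ factors through $\Gamma\twoheadrightarrow\Gamma^{\mathrm{ab}}\twoheadrightarrow\Gamma^{\mathrm{ab}}/\mathrm{tors}\cong\bZ^{r}$, where $r=\rk(\Gamma^{\mathrm{ab}})$ is the free rank of the abelianization (on the identity component the torsion of $\Gamma^{\mathrm{ab}}$, being a finite group, has only finitely many homomorphisms to $T$, of which only the trivial one lies in the identity component, so it is killed). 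Passing to $G$-orbits turns conjugation into the diagonal $W$-action, so
\[
\X^{0}(\Gamma,G)\;\cong\;\Hom(\bZ^{r},T)/W\;=\;T^{r}/W .
\]
For $\Gamma=\bZ^{k}$ this is classical, and for general nilpotent $\Gamma$ I would cite (or prove, via the topology of nilpotent representation varieties) the statement that $\Hom^{0}(\Gamma,G)$ consists exactly of the representations conjugate into $T$ that annihilate $\mathrm{tors}(\Gamma^{\mathrm{ab}})$; this is the one genuinely substantial input, and the only place nilpotency and compactness are used. Under this identification, functoriality of $H_{1}(-;\bZ)$ gives a homomorphism $\Out(\Gamma)\to\Aut(\bZ^{r})=\GL(r,\bZ)$ (inner automorphisms act trivially on homology, so the $\Aut(\Gamma)$-action descends to $\Out(\Gamma)$), and the image $A\in\GL(r,\bZ)$ of an element acts on $T^{r}=\Hom(\bZ^{r},T)$ by a continuous group automorphism (a change of exponents, up to the standard transpose/inverse identification) which commutes with the diagonal $W$-action.

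\medskip

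\emph{Step 2: the measure.} Let $\nu$ be normalized Haar measure on the compact abelian group $T^{r}$ and put $\mu=\pi_{*}\nu$, where $\pi\colon T^{r}\to T^{r}/W$ is the quotient map. Then $\mu$ is a finite Borel measure, and it has full support because the $\pi$-preimage of a nonempty open set is nonempty open, hence of positive Haar measure. Each $A\in\GL(r,\bZ)$ induces a continuous automorphism of the compact group $T^{r}$ (its inverse supplied by $A^{-1}\in\GL(r,\bZ)$), hence preserves $\nu$; since it also commutes with $W$, it descends to $T^{r}/W$ and preserves $\mu$. Therefore $\mu$ is a finite, full-support, $\Out(\Gamma)$-invariant measure on $\X^{0}(\Gamma,G)$, which is the first assertion.

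\medskip

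\emph{Step 3: mixing.} Let $\alpha\in\Aut(\Gamma)$ be hyperbolic, so that its image $A\in\GL(r,\bZ)$ has no eigenvalue on the unit circle; in particular no eigenvalue of $A$ is a root of unity. Fixing an isomorphism $T\cong(\bR/\bZ)^{d}$ with $d=\dim T$, the automorphism of $T^{r}\cong(\bR/\bZ)^{rd}$ induced by $A$ has, up to transpose, the eigenvalues of $A\otimes I_{d}$, namely exactly those of $A$; so its dual automorphism has no nontrivial finite orbit, and by the classical theorem (Halmos) that such automorphisms of compact abelian groups are strongly mixing, $(T^{r},\nu,A)$ is strongly mixing. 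Strong mixing descends to the finite quotient: identifying $L^{2}(T^{r}/W,\mu)$ with the $W$-invariant subspace of $L^{2}(T^{r},\nu)$ via the isometry $\pi^{*}$, for $f,g\in L^{2}(\mu)$ with $\tilde f=f\circ\pi$, $\tilde g=g\circ\pi$,
\[
\int_{\X^{0}} f\,(g\circ\bar\alpha^{\,n})\,d\mu \;=\; \int_{T^{r}}\tilde f\,(\tilde g\circ A^{n})\,d\nu \;\xrightarrow[n\to\infty]{}\; \Bigl(\int_{T^{r}}\tilde f\,d\nu\Bigr)\Bigl(\int_{T^{r}}\tilde g\,d\nu\Bigr) \;=\; \Bigl(\int_{\X^{0}} f\,d\mu\Bigr)\Bigl(\int_{\X^{0}} g\,d\mu\Bigr),
\]
where $\bar\alpha$ is the transformation of $\X^{0}(\Gamma,G)$ induced by $\alpha$. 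Thus $\bar\alpha$ is strongly mixing, which is the second assertion; since a mixing transformation is ergodic and ergodicity of one element forces ergodicity of the ambient group, such $\Gamma$ are goldman. When $\Gamma$ is abelian of rank $r\ge 2$, a hyperbolic element of $\GL(r,\bZ)=\Aut(\Gamma)$ exists for elementary reasons (for $r=2$, the companion matrix of $x^{2}-3x+1$; for larger $r$, companion matrices of suitable products).

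\medskip

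The step I expect to be the main obstacle is the torus reduction of Step 1 — establishing that representations in the identity component of $\Hom(\Gamma,G)$ have abelian image inside a maximal torus and kill the torsion of the abelianization. Everything downstream is soft: it uses only that automorphisms of compact groups preserve Haar measure, the classical fact that an automorphism of a compact abelian group with no nontrivial finite orbit on the Pontryagin dual is strongly mixing, and the routine observation that strong mixing descends along a quotient by a finite group of commuting measure-preserving symmetries.
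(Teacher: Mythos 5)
Your proposal is correct and follows essentially the same route as the paper: reduce $\X^0(\Gamma,G)$ to $T^r/W$ via the abelianization (the paper's "genuinely substantial input" here is exactly the Bergeron--Silberman theorem you flag in Step 1, combined with Baird's description of $\X^0(\bZ^r,K)$), push forward Haar measure, and descend strong mixing of a hyperbolic toral automorphism through the finite Weyl quotient. The only differences are presentational — you invoke Halmos's theorem for the mixing of $A\otimes I_d$ on the single torus $(\bR/\bZ)^{rd}$, where the paper proves the Fourier-analytic mixing criterion directly and handles the $r$ factors via a product lemma — so the two arguments are interchangeable.
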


Since strong mixing implies ergodicity, this immediately implies that the two classes of groups in Theorem \ref{thm:main} are goldman if the conditions of the theorem are satisfied.  We also give examples of non-abelian nilpotent groups that do in fact satisfy this condition, and some that do not.

We emphasize that Theorem \ref{thm:main} only requires the existence of a single hyperbolic element \footnote{In fact, we only require it has no eigenvalue a root of unity, see Theorem \ref{thm:main2}.} in $\Aut(\Gamma)$  whereas in the case of hyperbolic surface groups, a single hyperbolic element is not sufficient to obtain ergodicity by Brown \cite{Bro}.

When character varieties of nilpotent groups have other connected components than the one containing the trivial representation, the group $\Out(\Gamma)$ acts by some permutation on these components. In Section 5, we investigate the $\Out(\bZ^n)$ action on a specific class of examples where the \emph{exotic components} have been completely classified by Adem, Cohen, and G\'omez \cite{ACG}. The components in these examples are all isolated points, and we prove that the action of $\Out(\bZ^n)$ is transitive.

We also show that the action of $\Out(\Gamma)$ does not generally fix other components in the character variety by explicitly describing the action in some special cases.  Lastly, as a corollary of our main theorem, letting $\phi_t$ denote the flow associated to any noncompact $1$-parameter subgroup of $\SL(n,\bR)$, we define an associated flow $\widehat{\phi}_t$ on the space
\[ F_K = (\SL(n,\bR) \times \X^0(\bZ^n,K))/\SL(n,\bZ),\]
and prove that $\widehat{\phi}_t$ is strongly mixing.  This result should be compared to the main theorem in Forni-Goldman \cite{F-G}.

\subsection*{Acknowledgments}
Burelle thanks the Institute des Hautes \'Etudes Scientifiques for hosting him in 2019 when this work was started.  Lawton is partially supported by a Collaboration grant from the Simons Foundation, and also thanks the Institute des Hautes \'Etudes Scientifiques for hosting him in 2019 and again in 2021 when this work was advanced. We thank Lior Silberman for a very helpful conversation about nilpotent automorphisms which allowed us to expand Section \ref{section-examples}. We also thank David Fisher, Giovanni Forni, Ben McReynolds, and Mark Sapir for helpful references and/or discussions.  Lastly we thank the referee for helpful comments.

\section{Preliminaries}

The following definitions and lemmas are standard (see \cite{Quas} for a quick summary).

\begin{defn} A measure-preserving transformation $T$ is a map from a measure space $(X,\mu)$ to itself such that for each measurable subset $U$ of the space, $\mu(T^{-1}(U))=\mu(U)$. 
\end{defn}

\begin{defn}
A measure-preserving transformation $T$ is strong-mixing if, for any measurable sets $A,B$ one has $$\lim_{n\to\infty}\mu(A \cap T^{-n} B)= \mu(A)\mu(B).$$ 
\end{defn}

The following two lemmas follow directly from definitions.

\begin{lem}\label{prod-lem}
Let $T$ be strong-mixing on two finite measure spaces $X,Y$.  Then the diagonal action of $T$  on the product measure space $X\times Y$ is strong-mixing.
\end{lem}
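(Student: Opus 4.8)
The plan is to reduce the mixing statement on the product to the mixing statement on each factor by approximating arbitrary measurable sets in $X\times Y$ by finite disjoint unions of measurable rectangles $A_i\times B_i$. First I would recall that the collection of finite disjoint unions of rectangles forms an algebra generating the product $\sigma$-algebra, so by the standard approximation theorem, for any measurable $E\subset X\times Y$ and any $\varepsilon>0$ there is a finite disjoint union $R=\bigsqcup_{i=1}^m A_i\times B_i$ with $(\mu_X\times\mu_Y)(E\triangle R)<\varepsilon$. Since the transformation $T\times T$ is measure-preserving on the finite measure space $X\times Y$ (the product of two measure-preserving maps is measure-preserving, which is immediate on rectangles and extends by the uniqueness part of the product measure construction), the quantities $(\mu_X\times\mu_Y)(E\cap (T\times T)^{-n}F)$ change by at most $O(\varepsilon)$ under replacing $E,F$ by their rectangle approximations, uniformly in $n$; similarly $(\mu_X\times\mu_Y)(E)(\mu_X\times\mu_Y)(F)$ changes by $O(\varepsilon)$. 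Hence it suffices to prove the mixing limit when $E=A\times B$ and $F=C\times D$ are rectangles.

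For rectangles the computation is direct: $(T\times T)^{-n}(C\times D) = T^{-n}C\times T^{-n}D$, so
\[
(\mu_X\times\mu_Y)\bigl((A\times B)\cap(T\times T)^{-n}(C\times D)\bigr) = \mu_X(A\cap T^{-n}C)\,\mu_Y(B\cap T^{-n}D).
\]
Applying strong mixing of $T$ on $X$ and on $Y$ separately, the right-hand side converges as $n\to\infty$ to $\mu_X(A)\mu_X(C)\,\mu_Y(B)\mu_Y(D) = (\mu_X\times\mu_Y)(A\times B)\,(\mu_X\times\mu_Y)(C\times D)$, which is exactly the desired limit. Combining this with the approximation step of the previous paragraph (letting $\varepsilon\to 0$ after taking the limsup and liminf in $n$) yields strong mixing of $T\times T$ on $X\times Y$.

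I do not expect any genuine obstacle here; the lemma is essentially bookkeeping. The only point requiring a little care is making the approximation estimates uniform in $n$ — this is where finiteness of the measures is used, since it guarantees that $|\mu(E\cap(T\times T)^{-n}F) - \mu(R\cap(T\times T)^{-n}S)|$ is bounded by $(\mu_X\times\mu_Y)(E\triangle R) + (\mu_X\times\mu_Y)(F\triangle S)$ independently of $n$ (using that $(T\times T)^{-n}$ preserves measure and symmetric differences). One should also note the lemma as stated treats the single transformation $T$ acting diagonally; the same argument applies verbatim to a jointly acting group, which is the form in which it will be used for the $\Out(\Gamma)$-action.
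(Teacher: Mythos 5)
Your proposal is correct and follows essentially the same route as the paper: reduce to measurable rectangles and compute directly using $(T\times T)^{-n}(C\times D)=T^{-n}C\times T^{-n}D$ together with strong mixing on each factor. In fact you are slightly more careful than the paper, which simply asserts that checking on rectangles suffices because they generate the product $\sigma$-algebra; your approximation argument via the algebra of finite disjoint unions of rectangles, with estimates uniform in $n$, is the standard way to make that reduction rigorous.
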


\begin{proof}
Since the $\sigma$-algebra of measurable sets on $X\times Y$ is generated by measurable sets of the form $A\times B$ for measurable sets $A\subset X$ and $B\subset Y$, it suffices to check the definition for $A\times B$.  Indeed:
\begin{align*}
  &\lim_{n\to \infty}(\mu_X\times \mu_Y)((A_1\times A_2)\cap T^{-n}(B_1\times B_2))\\
&=\lim_{n\to \infty}(\mu_X\times \mu_Y)((A_1\times A_2)\cap (T^{-n}B_1\times T^{-n}B_2))\\
&=\lim_{n\to \infty}(\mu_X\times \mu_Y)((A_1\cap T^{-n}B_1)\times (A_2\cap T^{-n}B_2))\\
&=\lim_{n\to \infty}\mu_X(A_1\cap T^{-n}B_1)\mu_Y(A_2\cap T^{-n} B_2)\\
&=\mu_X(A_1)\mu_Y(B_1)\mu_X(A_2)\mu_Y(B_2)\\
&=\mu_X(A_1\times B_1)\mu_Y(A_2\times B_2).\qedhere
\end{align*}
\end{proof}

\begin{lem}\label{quot-lem}
Let $X$ be a finite measure space, $\pi:X\to Y$ is a measurable function where $Y$ has the push-forward measure. Let $T_X,T_Y$ be measure-preserving maps on $X,Y$ respectively, such that $\pi\circ T_X = T_Y \circ \pi$. If $T_X$ is strong-mixing on $X$, then $T_Y$ is strong-mixing on $Y$.
\end{lem}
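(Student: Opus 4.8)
The plan is to reduce strong mixing of $T_Y$ to strong mixing of $T_X$ by pulling measurable sets back along $\pi$. The push-forward measure is tailor-made for this: by definition $\mu_Y(E) = \mu_X(\pi^{-1}(E))$ for every measurable $E \subseteq Y$, so $\pi$ transports measure-theoretic data from $X$ down to $Y$ without loss.

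First I would fix arbitrary measurable sets $A, B \subseteq Y$, so that $\pi^{-1}(A)$ and $\pi^{-1}(B)$ are measurable subsets of $X$ with $\mu_X(\pi^{-1}(A)) = \mu_Y(A)$ and $\mu_X(\pi^{-1}(B)) = \mu_Y(B)$. Next, from the hypothesis $\pi \circ T_X = T_Y \circ \pi$ I would deduce by an immediate induction that $\pi \circ T_X^{n} = T_Y^{n} \circ \pi$ for all $n \geq 1$, and hence
\[
T_X^{-n}\bigl(\pi^{-1}(B)\bigr) = \bigl(\pi \circ T_X^{n}\bigr)^{-1}(B) = \bigl(T_Y^{n} \circ \pi\bigr)^{-1}(B) = \pi^{-1}\bigl(T_Y^{-n}(B)\bigr).
\]
Combining this with the elementary preimage identity $\pi^{-1}(A \cap T_Y^{-n} B) = \pi^{-1}(A) \cap \pi^{-1}(T_Y^{-n} B)$ and the defining property of the push-forward measure yields
\[
\mu_Y\bigl(A \cap T_Y^{-n} B\bigr) = \mu_X\bigl(\pi^{-1}(A) \cap T_X^{-n}(\pi^{-1}(B))\bigr).
\]
Letting $n \to \infty$ and applying strong mixing of $T_X$ to the sets $\pi^{-1}(A)$ and $\pi^{-1}(B)$, the right-hand side converges to $\mu_X(\pi^{-1}(A))\,\mu_X(\pi^{-1}(B)) = \mu_Y(A)\,\mu_Y(B)$, which is exactly the strong-mixing condition for $T_Y$. (Along the way one checks that $T_Y$ preserves $\mu_Y$: $\mu_Y(T_Y^{-1}A) = \mu_X(T_X^{-1}\pi^{-1}(A)) = \mu_X(\pi^{-1}(A)) = \mu_Y(A)$; this is also assumed in the statement.)

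I do not expect any serious obstacle: the whole argument is a diagram chase, and the only point requiring care is the direction of the intertwining relation and the bookkeeping of preimages — specifically, getting $T_X^{-n}\circ\pi^{-1} = \pi^{-1}\circ T_Y^{-n}$ rather than its reverse, which follows cleanly from $\pi \circ T_X = T_Y \circ \pi$ as above.
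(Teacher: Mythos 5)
Your argument is correct and is essentially identical to the paper's proof: both pull the sets $A,B$ back along $\pi$, use the iterated intertwining relation $\pi^{-1}\circ T_Y^{-n}=T_X^{-n}\circ\pi^{-1}$ together with the push-forward identity $\mu_Y=\mu_X\circ\pi^{-1}$, and then invoke strong mixing of $T_X$ on $\pi^{-1}(A)$ and $\pi^{-1}(B)$. The extra remark verifying that $T_Y$ preserves $\mu_Y$ is a harmless addition not present in the paper.
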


\begin{proof}
Let $\mu_X$ be the measure on $X$ and $\mu_Y$ be the measure on $Y$.  By assumption, we have that $\mu_Y=\mu_X\circ \pi^{-1}$ and $\pi\circ T_X=T_Y\circ \pi$.  Then:
\begin{align*}
\lim_{n\to \infty}\mu_Y( A\cap T_Y^{-n}B)&=\lim_{n\to \infty}\mu_X(\pi^{-1}(A\cap T_Y^{-n}B))\\
&=\lim_{n\to \infty}\mu_X(\pi^{-1}A\cap \pi^{-1}T_Y^{-n}B)\\
&=\lim_{n\to \infty}\mu_X(\pi^{-1}A\cap T_X^{-n}\pi^{-1}B)\\
&=\mu_X(\pi^{-1}A) \mu_X(\pi^{-1}B)\\
&=\mu_Y(A) \mu_Y(B). \qedhere
\end{align*}
\end{proof}

\begin{defn}
The transformation $T : X \rightarrow X$ is said to be ergodic if for any invariant set $U\subset X$, either $\mu(U)=0$ or $\mu(X\setminus U)=0$.  
\end{defn}

It is easy to see that strong mixing implies ergodicity.  Indeed, without loss of generality assume $\mu$ is a probability measure. If $A$ is invariant, then $\mu(A) = \lim_{n\to\infty} \mu(A \cap T^{-n}A)=\mu(A)^2$, and so $\mu(A) = 0$ or $\mu(A) = 1 = \mu(X)$.

\section{Abelian Groups are Goldman}\label{sec:abelian}

Let $\Gamma$ be a finitely generated nilpotent group and let $K$ be a compact, connected Lie group.

The quotient map $\Gamma\rightarrow \Gamma/[\Gamma,\Gamma]$ induces an inclusion of representation spaces
\[\iota : \Hom(\Gamma/[\Gamma,\Gamma],K) \rightarrow \Hom(\Gamma,K).\]

Bergeron and Silberman showed that this inclusion restricts to a homeomorphism between the connected components containing the trivial representation.
\begin{theorem}[\cite{B-S}]\label{key-theorem}
  \[\iota : \Hom^0(\Gamma/[\Gamma,\Gamma],K)\rightarrow \Hom^0(\Gamma,K)\]
  is a homeomorphism.
\end{theorem}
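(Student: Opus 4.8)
The plan is to prove that $\iota$ is a homeomorphism by analyzing the structure of representations of $\Gamma$ into the compact group $K$, using the fact that the image of a homomorphism near the trivial representation lands in a connected abelian subgroup. The key observation is that a representation $\rho : \Gamma \to K$ has abelian image precisely when it factors through the abelianization $\Gamma/[\Gamma,\Gamma]$, and so the content of the theorem is that representations in the identity component $\Hom^0(\Gamma,K)$ automatically have abelian (indeed, essentially toral) image.

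\medskip

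First I would reduce to a statement about where representations in the identity component take their values. Since $\iota$ is already a continuous injection (precomposition with the surjection $\Gamma \to \Gamma/[\Gamma,\Gamma]$), the task is to show that $\iota$ maps $\Hom^0(\Gamma/[\Gamma,\Gamma],K)$ \emph{onto} $\Hom^0(\Gamma,K)$ and that the inverse is continuous. Continuity of the inverse and the homeomorphism property will follow from standard facts about compact spaces once surjectivity is established, since $\iota$ is a continuous bijection between the relevant components and these are compact Hausdorff. Thus the heart of the matter is surjectivity: every $\rho \in \Hom^0(\Gamma, K)$ must have $\rho([\Gamma,\Gamma]) = \{1\}$, i.e.\ $\rho$ factors through the abelianization.

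\medskip

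The main step, which I expect to be the chief obstacle, is showing that representations in the identity component kill the commutator subgroup. Here I would use the structure of compact Lie groups together with nilpotency of $\Gamma$. The trivial representation has abelian image, and I would argue that the property of having image contained in a maximal torus (equivalently, commuting image on a generating set) is both open and closed within $\Hom(\Gamma,K)$, so it holds throughout the connected component of the trivial representation. Concretely, near the identity, commuting elements of $K$ lie in a common torus, and the nilpotency of $\Gamma$ forces the images of generators to commute: a path from the trivial representation can be analyzed generator by generator, using that any two sufficiently close commuting elements of $K$ together with the relations of $\Gamma$ constrain the commutators $\rho([\gamma_i,\gamma_j])$ to remain trivial by a connectedness/rigidity argument. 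Since this is precisely the input attributed to Bergeron--Silberman, I would invoke the abelianization-of-the-identity-component phenomenon for nilpotent groups into compact groups, verifying that the hypotheses of their result apply to $K$ connected and $\Gamma$ finitely generated nilpotent.

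\medskip

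Finally, I would assemble the pieces: given surjectivity, $\iota$ restricts to a continuous bijection $\Hom^0(\Gamma/[\Gamma,\Gamma],K) \to \Hom^0(\Gamma,K)$ of compact Hausdorff spaces, hence a homeomorphism. I would note that the identity components are indeed compact (being closed subsets of $\Hom(\Gamma,K) \subset K^r$ for a finite generating set of size $r$, which is compact), so the continuous-bijection-from-compact-to-Hausdorff criterion applies directly and yields continuity of the inverse for free. The delicate point throughout is the passage from ``commuting on generators near the identity'' to ``factoring through the abelianization on the entire identity component,'' which is where the nilpotency hypothesis and the compactness of $K$ are both essential; I would lean on Theorem~\ref{key-theorem}'s source \cite{B-S} for this rigidity rather than reproving it.
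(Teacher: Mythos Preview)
The paper does not contain a proof of this statement: it is quoted directly from Bergeron--Silberman \cite{B-S} and used as a black box. So there is no ``paper's own proof'' to compare against beyond the citation itself.

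Your proposal, however, is circular as a standalone argument. You correctly reduce the problem to showing that every $\rho \in \Hom^0(\Gamma,K)$ kills $[\Gamma,\Gamma]$, and you correctly note that once this is known the homeomorphism follows from the compact-Hausdorff bijection criterion. But for that key step you write that you would ``invoke the abelianization-of-the-identity-component phenomenon'' and ``lean on Theorem~\ref{key-theorem}'s source \cite{B-S}.'' That phenomenon \emph{is} the theorem you are asked to prove; deferring to \cite{B-S} at exactly this point means your argument has the logical form ``the theorem holds because \cite{B-S} proved it,'' which is a citation, not a proof. The sketch you give around it (open-and-closed condition, commuting elements near the identity lying in a torus) gestures at the right ingredients but never actually establishes why nilpotency of $\Gamma$ forces $\rho([\Gamma,\Gamma])=1$ along an entire path from the trivial representation.

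If you want a genuine proof rather than a citation, the substantive work in \cite{B-S} is an induction on the nilpotency class: one shows that for $\rho$ in the identity component the image of the last nonzero term of the lower central series is trivial (using that commutators of elements close to $1$ in a Lie group are trivial, combined with connectedness), and then descends. Your outline does not supply this inductive mechanism, which is precisely where both the nilpotency hypothesis and the compactness of $K$ do their real work.
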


Moreover, writing $\Gamma/[\Gamma,\Gamma] \cong \bZ^r \oplus A$ where $r = \rk(H^1(\Gamma,\bZ))$ and $A$ is a finite abelian group, any continuous deformation of the trivial representation must map $A$ to the identity since Lie groups do not have small subgroups. Therefore, $\Hom^0(\Gamma/[\Gamma,\Gamma],K)\cong \Hom^0(\bZ^r,K)$.

Since the Lie group $K$ is compact, the identity component of the character variety $\X^0(\bZ^r,K)$ has a simple description (see \cite[Remark 4]{Baird}): for any representation $\rho:\bZ^r \rightarrow K$, the images of the generators $$\rho(e_1),\dots,\rho(e_r)$$ are simultaneously diagonalizable, and so
\[\X^0(\bZ^r,K) \cong (U(1)^k)^r/W,\]
where $k = \rk(K)$ and $W$ is the Weyl group of $K$ acting diagonally on $r$ copies of the maximal torus $U(1)^k$.

Consider an automorphism $F\in \Aut(\Gamma)$.

\begin{defn}
  We say $F$ is \emph{hyperbolic} if the induced automorphism $\Gamma/[\Gamma,\Gamma]\rightarrow \Gamma/[\Gamma,\Gamma]$, also denoted by $F$, is a hyperbolic automorphism of the free abelian part $\bZ^r$. Hyperbolic automorphisms of free abelian groups are those which have no eigenvalue on the unit circle.
\end{defn}

The automorphism $F$ induces an automorphism on the character variety by pre-composition which we denote by $\widehat{F}\in\Aut(\X(\Gamma,K))$. By the above observations, $\X^0(\Gamma,K)\cong (U(1)^k)^r/W$. We can write an element of $(U(1)^k)^r$ as a $k\times r$ matrix
\[\Theta_\rho := \begin{pmatrix} \theta_{1,1} & \cdots & \theta_{1,r}\\
   \vdots & \ddots & \vdots\\
   \theta_{k,1} & \cdots & \theta_{k,r}\end{pmatrix},\]
where $\theta_{i,j} = \frac{\arg(\rho(e_j)_i)}{2\pi}\in \bR/\bZ$. Then, if the automorphism $F$ acts on the free part $\bZ^r$ via the matrix $M_F$, then $\widehat{F}$ acts by on the character variety by $\Theta_\rho \mapsto \Theta_\rho M_F$ preserving $W$-orbits.

By assumption, $M_F$ has no eigenvalues of modulus $1$ and so $\Theta_\rho \mapsto \Theta_\rho M_F$ is the diagonal action of a \emph{hyperbolic toral automorphism} on $r$ copies of the $k$-torus $U(1)^k$. 

The following lemma is well-known to dynamicists, and often left as an exercise (see for example \cite{KH}).  We include a proof here for the convenience of the reader (who may not be a dynamicist).

\begin{lem}\label{key-lem}
  An automorphism of the $n$-torus $T^n$ is strong mixing if and only if it does not have an eigenvalue that is a root of unity.
  \end{lem}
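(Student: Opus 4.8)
The plan is to use Fourier analysis on the torus $T^n = (\bR/\bZ)^n$, since an automorphism is strong-mixing precisely when the Koopman operator it induces on $L^2(T^n)$ has no nonconstant invariant functions under iteration in the appropriate averaged sense — but for torus automorphisms the cleanest route is to test the mixing definition directly on characters. Let $A \in \GL(n,\bZ)$ be the integer matrix defining the automorphism $T_A(x) = Ax$. The characters of $T^n$ are $\chi_m(x) = e^{2\pi i \langle m, x\rangle}$ for $m \in \bZ^n$, and these form an orthonormal basis of $L^2(T^n)$ with respect to Haar measure $\mu$. The key computation is that $\chi_m \circ T_A^{-n} = \chi_{(A^{-n})^{\mathsf T} m}$, so that the Koopman operator permutes characters according to the dual action $m \mapsto (A^{\mathsf T})^{-1} m$ on $\bZ^n$.

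First I would reduce the mixing condition, via bilinearity and an $L^2$-density argument, to checking that $\langle \chi_{m} \circ T_A^{-n}, \chi_{m'}\rangle \to \langle \chi_m, 1\rangle\langle 1, \chi_{m'}\rangle$ for all pairs of characters; since distinct characters are orthogonal, the right side is zero unless $m = m' = 0$, in which case it is $1$. So the statement becomes: for every nonzero $m \in \bZ^n$, the orbit $\{(A^{\mathsf T})^{-k} m : k \geq 0\}$ eventually leaves any finite set — equivalently, $(A^{\mathsf T})^{-k} m \ne m'$ for all large $k$, for each fixed $m'$. This holds iff no nonzero $m$ has a finite forward orbit under $(A^{\mathsf T})^{-1}$, i.e.\ iff $(A^{\mathsf T})^{-1}$ (equivalently $A$) has no nonzero periodic point in $\bZ^n$.

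Next I would show that $A$ has a nonzero periodic point in $\bZ^n$ if and only if $A$ has an eigenvalue that is a root of unity. One direction: if $A^q$ fixes a nonzero lattice vector then $1$ is an eigenvalue of $A^q$, so some eigenvalue $\lambda$ of $A$ satisfies $\lambda^q = 1$. Conversely, if $A$ has an eigenvalue $\lambda$ with $\lambda^q = 1$, then $A^q$ has $1$ as an eigenvalue; since $A^q$ has integer entries, $\ker(A^q - I)$ is a rational subspace, hence contains a nonzero integer vector, which is then a periodic point. Combining this with the previous paragraph gives: $T_A$ is strong-mixing iff $A$ has no root-of-unity eigenvalue, which is the claim.

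The main obstacle is purely expository rather than mathematical: one must be careful about the direction of the dual action and about the fact that $T_A^{-1}$ is also a torus automorphism (given by $A^{-1} \in \GL(n,\bZ)$), so that "no root of unity eigenvalue" is a condition invariant under $A \mapsto A^{-1}$ and under transpose — all of which is needed to move cleanly between the forward and backward orbits appearing in the mixing definition. A secondary technical point is justifying the passage from characters to arbitrary $L^2$ functions (and then to indicator functions of measurable sets): this is the standard argument that a uniformly bounded family of operators converging on a dense set converges on the whole space, together with $L^2$-approximation of indicators, and I would state it briefly rather than belabor it.
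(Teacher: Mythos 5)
Your proposal is correct and takes essentially the same route as the paper's proof: expand in the character basis $e^{2\pi i\langle m,x\rangle}$, note that the Koopman operator permutes characters via the transposed dual action on $\bZ^n$, identify ``no root-of-unity eigenvalue'' with ``every nonzero lattice vector has an infinite (unbounded) dual orbit,'' and pass to general $L^2$ functions by density. The only cosmetic difference is in the converse, where the paper exhibits an explicit nonconstant invariant function to contradict ergodicity while you observe directly that a periodic lattice vector yields a matrix coefficient that does not tend to zero; both are fine.
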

  
  \begin{proof}
    Let $M : T^n \rightarrow T^n$ be a toral automorphism with no eigenvalue a root of unity.
    Let $f,g \in L^2(T^n)$, it is sufficient to show that
    \[\int_{T^n} (f\circ M^k)g ~d\mu \xrightarrow{k\rightarrow\infty} \int_{T^n} f d\mu \int_{T^n} g ~d\mu,\]
    or equivalently, denoting the Hilbert inner product by $\langle\ ,\ \rangle$,
    \[\langle f \circ M^k, g \rangle \xrightarrow{k\rightarrow\infty} \langle f, 1\rangle \langle g, 1\rangle.\]

    This is because specializing $f,g$ to indicator functions of measurable sets gives exactly the definition of strong mixing.

    Denote by $e_{\vec{n}}(\vec{x}) = e^{2\pi i \langle \vec{n}, \vec{x}\rangle}$ the standard Fourier Hilbert basis, where $\vec{n}\in \bZ^n$. Precomposition by $M^k$ permutes these basis vectors:
    \[e_{\vec{n}}(M^k\vec{x}) = e^{2\pi i \langle \vec{n}, M^k\vec{x}\rangle}
    =e^{2\pi i \langle (M^{\dagger})^k\vec{n}, \vec{x}\rangle} = e_{(M^{\dagger})^k\vec{n}}(\vec{x}),\] where $\dagger$ is transpose.

    Moreover, the $M$ orbit of every non-zero integer vector $\vec{n}$ is unbounded, since if it were 
    bounded then it would be finite and hence $M^k$ would have a fixed vector for some $k$, but by assumption $M$ has no eigenvalue that is a root of unity.

    Therefore, for non-constant basis vectors we have
    \[\langle e_{\vec{n}}\circ T^k, e_{\vec{m}}\rangle \xrightarrow{k\rightarrow\infty} 0.\]

    This implies that for finite linear combinations of basis vectors, the statement holds.
    Now, since the Fourier series for $f$ and $g$ converge in $L^2$, if we denote the partial
    sums by $f_n$ and $g_n$ we have:
    \begin{align*}
      |\langle f \circ M^k, g\rangle - \langle f_n \circ M^k, g_n\rangle| &= |\langle (f - f_n) \circ M^k, g\rangle + \langle f_n \circ M^k, g-g_n\rangle|\\
      &\le \norm{f-f_n} \norm{g} + \norm{f_n} \norm{g - g_n}\xrightarrow{n\rightarrow\infty} 0\\
    \end{align*}
    and therefore, $\langle f \circ M^k, g\rangle \xrightarrow{k\rightarrow\infty} \langle f,1\rangle \langle g,1\rangle$ as desired.

    For the converse, if an automorphism $M : T^n \rightarrow T^n$ has an eigenvalue which is a root of unity, then $M^\dagger$ also has an eigenvalue which is a root of unity ($M$ and $M^\dagger$ have the same spectrum).  Thus, there exists $k\geq 1$ and non-zero $\vec{v}$ so that $(M^\dagger)^k\vec{v}=\vec{v}$.  Hence $M^\dagger\vec{u}=\vec{u}$ for $\vec{u}:=\sum_{i=0}^{k-1}(M^\dagger)^i\vec{v}$.  Then from our above calculations: $e_{\vec{u}}(M\vec{x})  = e_{(M^{\dagger})\vec{u}}(\vec{x})=e_{\vec{u}}(\vec{x}).$  Thus, there is a non-constant invariant measurable function.  Hence, the action cannot be ergodic (since if it was ergodic all invariant measurable functions would be constant), let alone strong mixing.
  \end{proof}

We can now prove Theorem \ref{thm:main} from the Introduction, which follows from the following slightly more general theorem:

\begin{theorem}\label{thm:main2}
Let $\Gamma$ be a finitely generated nilpotent group, and $G$ a compact connected Lie group.  Then $\Out(\Gamma)$ is strongly mixing on $\X^0(\Gamma, G)$ whenever there exists an element in $\Aut(\Gamma)$ that does not have any eigenvalue that is a root of unity.
\end{theorem}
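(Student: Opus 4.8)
The plan is to reduce the statement to the already-established dynamics of hyperbolic toral automorphisms. First, I would invoke Theorem~\ref{key-theorem} together with the two observations immediately following it: since $\Gamma$ is finitely generated nilpotent, $\X^0(\Gamma,G)$ is homeomorphic to $(U(1)^k)^r/W$, where $r = \rk(H^1(\Gamma,\bZ))$, $k = \rk(G)$, and $W$ is the Weyl group of $G$ acting diagonally on $r$ copies of the maximal torus. Moreover, $\Aut(\Gamma)$ acts on this model through the induced action of $\mathsf{GL}(r,\bZ)$ on the matrix of angle-coordinates $\Theta_\rho \mapsto \Theta_\rho M_F$, commuting with the $W$-action; this descends to an action of $\Out(\Gamma)$ since inner automorphisms act trivially on the abelianization.

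Next, the hypothesis supplies an element $F \in \Aut(\Gamma)$ whose induced map on $\bZ^r$ has matrix $M_F$ with no eigenvalue a root of unity. Then $\Theta \mapsto \Theta M_F$ on $(U(1)^k)^r = T^{kr}$ is an automorphism of the torus $T^{kr}$; I should check that it, too, has no eigenvalue a root of unity — its eigenvalues are exactly those of $M_F$ (each with multiplicity $k$, as it is a block-diagonal action of $M_F$ on $k$ copies of $\bR^r$), so this is immediate. By Lemma~\ref{key-lem}, $\widehat{F}$ acts as a strong-mixing transformation on $T^{kr}$ with respect to Haar (Lebesgue) measure.

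Now I would push this down through the Weyl group quotient using Lemma~\ref{quot-lem}: take $X = T^{kr}$ with normalized Haar measure, $\pi : T^{kr} \to T^{kr}/W = \X^0(\Gamma,G)$ the quotient map, $Y$ equipped with the push-forward measure $\mu_Y = \pi_* \mu_X$, and $T_X = \widehat{F}$, $T_Y = $ the induced map on the quotient. The relation $\pi \circ T_X = T_Y \circ \pi$ holds precisely because $M_F$ normalizes the $W$-action (indeed commutes with it), and $W$ is finite so the push-forward measure is finite with full support. Lemma~\ref{quot-lem} then gives that $T_Y = \widehat{F}$ is strong-mixing on $\X^0(\Gamma,G)$. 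Since a single element of $\Out(\Gamma)$ acts in a strong-mixing way, the full group $\Out(\Gamma)$ acts in a strong-mixing (hence ergodic) manner with respect to $\mu_Y$, and separately the existence of this finite full-support invariant measure establishes the first assertion of Theorem~\ref{thm:main}.

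The only genuinely delicate point is verifying that the push-forward measure $\mu_Y$ on $\X^0(\Gamma,G)$ is well-behaved enough to run Lemma~\ref{quot-lem} — i.e., that $\pi$ is measurable, that $\mu_Y$ is a legitimate finite measure, and that $\widehat{F}$ is measure-preserving on $(Y,\mu_Y)$; all of these follow from $W$ being a finite group acting by measure-preserving homeomorphisms and $M_F$ commuting with this action, so the main work is simply assembling these functoriality statements carefully rather than overcoming a real obstacle. A secondary bookkeeping point is confirming that the finite abelian summand $A$ of $\Gamma/[\Gamma,\Gamma]$ plays no role — this is handled by the remark in the text that continuous deformations of the trivial representation kill $A$, so $\Hom^0$ only sees $\bZ^r$.
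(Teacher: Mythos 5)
Your proposal is correct and follows essentially the same route as the paper: reduce to $(U(1)^k)^r/W$ via Theorem~\ref{key-theorem} and the Bergeron--Silberman/Baird observations, apply Lemma~\ref{key-lem} to the hyperbolic toral action, and descend through the finite Weyl group quotient with Lemma~\ref{quot-lem}. The only cosmetic difference is that you treat the diagonal action as a single automorphism of $T^{kr}$ (correctly noting its eigenvalues are those of $M_F$ with multiplicity $k$), whereas the paper applies Lemma~\ref{key-lem} to one torus and then invokes Lemma~\ref{prod-lem} for the product; both are valid.
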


\begin{proof}
First, if the free part of $\Gamma/[\Gamma,\Gamma]$ is empty, then $\X^0(\Gamma, G)$ is a point (the identity representation), and the theorem holds trivially.

Now assume that the free part of $\Gamma/[\Gamma,\Gamma]$ has rank $r \geq 1$, and let $F$ be an automorphism of $\Gamma/[\Gamma,\Gamma]$ without an eigenvalue that is a root of unity.  By Theorem \ref{key-theorem} and \cite[Remark 4]{Baird}), $\X^0(\Gamma,K)\cong (U(1)^k)^r/W$.

Since a toral automorphism with no eigenvalue a root of unity is strong mixing with respect to the Lebesgue measure (Lemma \ref{key-lem}), its diagonal action on a product of tori is also strong mixing with respect to the product measure (Lemma \ref{prod-lem}).

The Weyl group acts on $(U(1)^k)^r$ by Euclidean reflections, which preserve Lebesgue measure, and so the $\Out(\Gamma)$-action on $$\X^0(\Gamma,K)\cong (U(1)^k)^r/W$$ has an invariant measure, induced by the product of Lebesgue measures on $(U(1)^k)^r$. Therefore, the action of $\widehat{F}$ on $\X^0(\Gamma,K)\cong (U(1)^k)^r/W$ is strong mixing with respect to this measure by Lemma \ref{quot-lem}.
\end{proof}

\begin{remark}
 Whenever $\Gamma$ is abelian with non-trivial free part, $\Out(\Gamma)$ contains a hyperbolic element.  
\end{remark}

\begin{remark}
The ``only if'' part of Lemma \ref{key-lem} implies that if the group of automorphisms of $\Gamma$ is unipotent then a {\it single} automorphism cannot be mixing nor ergodic on $\X^0(\Gamma, G)$.
\end{remark}

\begin{remark}
 It is known that if $\Gamma$ is finitely generated and torsion-free, then $\Out(\Gamma)$ is non-trivial \cite{Ree}.   It appears to be an open problem whether there exists a \emph{finitely generated} nilpotent group with trivial outer automorphism group (which does occur if $\Gamma$ is allowed to be infinitely generated \cite{Za}).
\end{remark}

 In the next section we provide large classes of examples of non-abelian nilpotent groups that admit hyperbolic outer automorphisms, and some that do not.

\section{Examples of Nilpotent Groups that are Goldman}\label{section-examples}

If a subset $S$ of a group $G$ projects onto a generating set of the abelianization $G/[G,G]$, then $S$ is said to {\it weakly generate} $G$.  Every generating set of $G$ is a weak generating set, but the converse is not always true.  For nilpotent groups, however, the converse holds :

\begin{lem}{\cite[Lemma 5.9]{MKS}} \label{wg-lemma}Let $S$ be a subset of a nilpotent group $G$. Then
 $S$ weakly generates $G$ if and only if $S$ generates $G$.
\end{lem}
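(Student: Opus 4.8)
The plan is to prove only the nontrivial implication: if $S$ weakly generates the nilpotent group $G$, then $S$ generates $G$. (The converse is immediate, since the image of any generating set of $G$ generates the abelianization $G/[G,G]$.) Write $H = \langle S\rangle$; by definition, $S$ weakly generates $G$ precisely when $H[G,G] = G$, so the task is to upgrade this to $H = G$. I would argue by induction on the nilpotency class $c$ of $G$, using the lower central series $\gamma_1(G)=G$, $\gamma_{k+1}(G)=[\gamma_k(G),G]$, with $\gamma_{c+1}(G)=\{1\}$.

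For the base case $c \le 1$ the group $G$ is abelian, so $[G,G]=\{1\}$ and the hypothesis $H[G,G]=G$ already says $H=G$. For the inductive step, let $Z := \gamma_c(G)$, the last nontrivial term of the lower central series. Then $Z$ is central in $G$ (because $[Z,G]=\gamma_{c+1}(G)=\{1\}$) and $G/Z$ is nilpotent of class $c-1$. Since $c \ge 2$ we have $Z \le \gamma_2(G)=[G,G]$, and therefore $(G/Z)/[G/Z,G/Z] = G/Z[G,G] = G/[G,G]$. Consequently the image of $S$ in $G/Z$ still weakly generates $G/Z$, so by the inductive hypothesis that image generates $G/Z$, which is to say $HZ = G$.

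It then remains to deduce $H = G$ from the two facts $HZ = G$ with $Z$ central, together with $H[G,G] = G$. Because $Z$ is central, each $z \in Z$ commutes with every element of $H$, so conjugation by $z$ fixes $H$ pointwise; hence every element of $G = HZ$ normalizes $H$, giving $H \trianglelefteq G$. The inclusion $Z \hookrightarrow G$ followed by the projection $G \to G/H$ is then a surjective group homomorphism with abelian source $Z$, so $G/H$ is abelian, whence $[G,G] \le H$. Combining this with $H[G,G]=G$ yields $G = H[G,G] = H$, completing the induction.

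I do not expect a real obstacle: this is a routine filtration argument, and the only place needing a moment's care is checking that weak generation passes to $G/Z$, which rests on the elementary inclusion $\gamma_c(G) \le [G,G]$ valid once $c \ge 2$ (the $c=1$ case being the base of the induction). One could alternatively bypass the induction by invoking the normalizer condition for nilpotent groups (a proper subgroup is properly contained in its normalizer, hence in a proper normal subgroup with abelian quotient, which contradicts $H[G,G]=G$), but the inductive route above is cleaner and entirely self-contained, so that is the one I would write out.
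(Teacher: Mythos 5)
The paper does not prove this lemma at all; it is quoted directly from Magnus--Karrass--Solitar (Lemma 5.9 there), so there is no internal argument to compare against. Your proof is correct and complete: the induction on nilpotency class via the last nontrivial term $\gamma_c(G)$ of the lower central series, the observation that weak generation descends to $G/\gamma_c(G)$ because $\gamma_c(G)\le[G,G]$ when $c\ge 2$, and the closing step (centrality of $Z$ gives $H\trianglelefteq G$ with $G/H$ abelian, hence $[G,G]\le H$ and $G=H[G,G]=H$) is exactly the classical argument one finds in the literature, so nothing further is needed.
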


We will see this fact is very useful to understand which nilpotent groups admit hyperbolic automorphisms.

\begin{ex}\label{ex-free}
For a group $\Gamma$, the {\it lower central series} is defined by $\ell_0(\Gamma)=\Gamma$ and $\ell_{i+1}(\Gamma)=[\Gamma,\ell_i(\Gamma)]$ where $[X,Y]$ is the commutator subgroup generated by $X,Y\leq \Gamma$.  A group is nilpotent, by definition, if the series $$\Gamma=\ell_0(\Gamma)\geq \ell_1(\Gamma) \geq \ell_2(\Gamma)\geq\cdots$$ terminates, that is, for some $s$, $\ell_s(\Gamma)$ is the trivial group $1$.  When $s$ is the minimal such integer we say $\Gamma$ is nilpotent of step size $s$.  The {\it free nilpotent} group of step size $s$ and of rank $r$ is the quotient group $N_{s,r}:=F_r/\ell_{s}(F_r)$ where $F_r$ is a free group of rank $r$.

The abelianization homomorphism $N_{s,r}\to N_{s,r}/[N_{s,r},N_{s,r}]\cong \bZ^r$ induces a homomorphism $\varphi:\Out(N_{s,r})\to \GL(r,\bZ)$.  Lemma \ref{wg-lemma} implies $\varphi$ is surjective, and hence there are hyperbolic automorphisms of $N_{s,r}$ for all $s\geq 1$ and $r\geq 1$.
\end{ex}

\begin{ex}\label{ex-heisenberg}The $(2n+1)$-dimensional discrete Heisenberg group is the group $H_{2n+1}(\bZ):=\bZ^{n}\times\bZ^n\times \bZ$ with the group law:  $$(\mathbf{a}_1,\mathbf{b}_1,c_1)*(\mathbf{a}_2,\mathbf{b}_2,c_2)=(\mathbf{a}_1+\mathbf{a}_2,\mathbf{b}_1+\mathbf{b}_2,c_1+c_2+(\mathbf{a}_1\cdot \mathbf{b}_2-\mathbf{a}_2\cdot \mathbf{b}_1)).$$

Since $\{(\mathbf{0},\mathbf{0},c)\in H_{2n+1}(\bZ)\ |\ c\in \bZ\}$ is central in $H_{2n+1}(\bZ)$, and $\mathbf{a}_1\cdot \mathbf{b}_2-\mathbf{a}_2\cdot \mathbf{b}_1$ defines a symplectic form any automorphism must preserve, the abelianization homomorphism $$H_{2n+1}(\bZ)\to H_{2n+1}(\bZ)/[H_{2n+1}(\bZ),H_{2n+1}(\bZ)]$$ induces a homomorphism $\Out(H_{2n+1}(\bZ))\to \mathsf{Sp}(2n,\bZ).$ Every $M\in\mathsf{Sp}(2n,\bZ)$ defines an automorphism of $H_{2n+1}(\bZ)$ by $$M(\mathbf{a},\mathbf{b},c) := (M(\mathbf{a},\mathbf{b}) , c)$$ since it preserves the group law.  Thus $\Out(H_{2n+1}(\bZ))\to \mathsf{Sp}(2n,\bZ)$ is an epimorphism and  we conclude there exists hyperbolic elements in $\Out(H_{2n+1}(\bZ))$.
\end{ex}

\begin{remark}
In the case that $n=1$ we can say more. In \cite{heisauto}, it is shown that $\Out(H_3(\bZ)) \cong \GL(2,\bZ)\cong \SL(2,\bZ)\ltimes (\bZ/2\bZ)$.   This is consistent with our prior observations since $\SL(2,\bZ)\cong \mathsf{Sp}(2,\bZ)$ and $H_3(\bZ)\cong N_{2,2}$.
\end{remark}

\begin{cor}
The action of $\Out(\Gamma)$ on $\X^0(\Gamma,K)$ is ergodic with respect to the Lebesgue measure whenever $\Gamma$ is a free nilpotent group, or a discrete Heisenberg group.  
\end{cor}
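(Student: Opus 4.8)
The plan is to combine the two families of examples just established with Theorem~\ref{thm:main2}. The key point is that Theorem~\ref{thm:main2} reduces the whole question to a statement about $\Aut(\Gamma)$: it suffices to exhibit, for each $\Gamma$ in question, a single automorphism with no eigenvalue a root of unity. So the corollary is essentially a bookkeeping statement: Examples~\ref{ex-free} and~\ref{ex-heisenberg} produce such automorphisms, and then Theorem~\ref{thm:main2} (together with the implication ``strong mixing $\Rightarrow$ ergodic'' recorded in Section~2) gives ergodicity with respect to the Lebesgue-induced measure on $\X^0(\Gamma,K)\cong (U(1)^k)^r/W$.

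Concretely, I would proceed in two cases. First, let $\Gamma = N_{s,r}$ be free nilpotent of step $s$ and rank $r$. By Example~\ref{ex-free}, the map $\varphi\colon \Out(N_{s,r})\to \GL(r,\bZ)$ induced by abelianization is surjective (this used Lemma~\ref{wg-lemma}). Pick any hyperbolic matrix $M\in\GL(r,\bZ)$ — e.g. a companion matrix of a Salem or Pisot polynomial, or for $r=1$ simply $(-1)$ fails but for $r\geq 2$ the matrix $\left(\begin{smallmatrix}2&1\\1&1\end{smallmatrix}\right)\oplus I_{r-2}$ already has an eigenvalue off the unit circle — and lift it to an automorphism $F\in\Aut(N_{s,r})$. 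Since the eigenvalues of $F$ acting on the free part $\bZ^r$ of the abelianization are exactly the eigenvalues of $M$, none of them is a root of unity. Hence $\Aut(N_{s,r})$ satisfies the hypothesis of Theorem~\ref{thm:main2}, so $\Out(N_{s,r})$ acts strongly mixing, in particular ergodically, on $\X^0(N_{s,r},K)$ with respect to the measure induced from Lebesgue measure on $(U(1)^k)^r$.

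Second, let $\Gamma = H_{2n+1}(\bZ)$ be a discrete Heisenberg group. By Example~\ref{ex-heisenberg}, every $M\in\mathsf{Sp}(2n,\bZ)$ lifts to an automorphism of $H_{2n+1}(\bZ)$ acting on the free part $\bZ^{2n}$ of the abelianization via $M$. It remains only to note that $\mathsf{Sp}(2n,\bZ)$ contains hyperbolic elements: for $n=1$ this is again $\left(\begin{smallmatrix}2&1\\1&1\end{smallmatrix}\right)$, and for general $n$ one may take a block-diagonal symplectic matrix built from $n$ copies of such a $2\times 2$ block, or any symplectic matrix whose characteristic polynomial is irreducible over $\bQ$ with no roots of unity among its roots. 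For such a lift $F$, the eigenvalues on the abelianized free part are the eigenvalues of $M$, none a root of unity, so Theorem~\ref{thm:main2} again applies and $\Out(H_{2n+1}(\bZ))$ acts ergodically on $\X^0(H_{2n+1}(\bZ),K)$.

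I do not expect any genuine obstacle here; the only thing to be careful about is that Theorem~\ref{thm:main2} phrases its hypothesis in terms of $\Aut(\Gamma)$, while the examples naturally produce elements of $\Out(\Gamma)$ surjecting onto $\GL(r,\bZ)$ or $\mathsf{Sp}(2n,\bZ)$ — but any preimage in $\Aut(\Gamma)$ of a hyperbolic element has the same action on the abelianization's free part, so the distinction is immaterial. The one true input is the surjectivity statements in Examples~\ref{ex-free} and~\ref{ex-heisenberg}, which rest on Lemma~\ref{wg-lemma} and on the explicit verification that symplectic matrices preserve the Heisenberg group law; both are already in hand.
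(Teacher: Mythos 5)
Your proposal is correct and follows exactly the same route as the paper, whose proof is a one-line citation of Theorem~\ref{thm:main2} together with Examples~\ref{ex-free} and~\ref{ex-heisenberg}; you merely make explicit the choice of hyperbolic matrices and the (immaterial) passage between $\Out(\Gamma)$ and $\Aut(\Gamma)$. Your parenthetical observation that $r=1$ yields no hyperbolic element is a fair caveat (the paper's Example~\ref{ex-free} overstates the range of $r$), but it does not affect the argument for $r\geq 2$ or for the Heisenberg groups.
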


\begin{proof}
Theorem \ref{thm:main2} and Examples \ref{ex-free} and \ref{ex-heisenberg} establish the corollary. 
\end{proof}

Moreover, there exists finitely generated nilpotent groups that do {\it not} admit hyperbolic automorphisms as the next example shows.

\begin{ex}
  Dyer \cite{Dyer} shows there exists a nilpotent Lie algebra whose automorphism group is strictly unipotent.  As Dyer says in his introduction ``for connected simply connected nilpotent Lie groups the choice of group or algebra language is a matter of taste.''  Consequently, by taking $\bZ$-points of the corresponding Lie group that Dyer considers, we have a finitely generated nilpotent $\Gamma$ whose automorphism group is strictly unipotent, hence there can be no hyperbolic automorphism in that case.
\end{ex}

\begin{remark}
Dyer \cite{Dyer} proves that all automorphisms in his example are unipotent by showing they are {\it trivial on the abelianization}.  Thus, Dyer's example shows there are examples where the corresponding action on $\X^0(\Gamma, G)$ is not ergodic since the induced automorphisms are all trivial. Hence, we know that the class of finitely generated nilpotent groups are not all goldman.
\end{remark}

The next example generalizes both abelian and Heisenberg groups.

\begin{ex}\label{ex:SilbermanLU}
The following example was communicated to us by Lior Silberman.  Let $G$ be a connected reductive affine algebraic group defined over $\bZ$, and $P<G$ a parabolic subgroup. Then $P=LU$ where $L$ is the Levi factor of $P$, and $U$ is the unipotent radical of $P$.  Since $P\cong L\ltimes U$, $L$ acts on $U$ by conjugation $($and hence are automorphisms$)$.  In general, unipotent Lie groups $($having all elements unipotent$)$ are nilpotent groups. 

The set of $\bZ$-points of $U$ forms a finitely generated nilpotent group, and the set of $\bZ$-points of $L$ acts on $U$ by $($non-inner$)$ automorphisms.  Since $L$ is reductive, the locus of $\bZ$-points generally contains hyperbolic elements.
\end{ex}

\begin{ex}
As pointed out to us by David Fisher, Nilmanifolds $N$ admitting an Anosov diffeomorphism $\alpha$ give examples of nilpotent groups $($the fundamental group of $N)$ admitting a hyperbolic automorphism $($induced by $\alpha)$.  See for example \cite{LW} and the references therein.
\end{ex}

The fact that there exist examples of finitely generated nilpotent $\Gamma$ where $\Out(\Gamma)$ does not satisfy the conditions of Theorem \ref{thm:main2} does not a priori rule out the possibility that the action on $\X^0(\Gamma, G)$ is ergodic.  So we pose:

\begin{question}\label{question2}Is there an example of a finitely generated nilpotent group $\Gamma$ such that $\Out(\Gamma)$ is unipotent and the action on $\X^0(\Gamma, G)$ is ergodic?
\end{question}

Given the example by Dyer, we suspect the answer is no.  That, with Theorem \ref{thm:main2}, motivates us to formulate:

\begin{conjecture}
 Let $\Gamma$ be a finitely generated nilpotent group, and $G$ a connected compact Lie group.  Then $\Out(\Gamma)$ acts ergodically on $\X^0(\Gamma,G)$ with respect to the Lebesgue measure if and only if $\Out(\Gamma)$ contains an element with no eigenvalue a root of unity.
\end{conjecture}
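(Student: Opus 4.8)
The forward implication is already in hand: if $\Out(\Gamma)$ contains an element $F$ with no eigenvalue a root of unity, then Theorem~\ref{thm:main2} shows $\widehat{F}$ is strong mixing on $\X^0(\Gamma,G)$, hence ergodic, and since any $\Out(\Gamma)$-invariant measurable function is in particular $\langle\widehat{F}\rangle$-invariant it must be constant, so $\Out(\Gamma)$ acts ergodically. Thus all the content is in the reverse implication, and the plan is to prove its contrapositive: assuming \emph{every} element of $\Out(\Gamma)$ has an eigenvalue that is a root of unity, produce a nonconstant $\Out(\Gamma)$-invariant function on $\X^0(\Gamma,G)$.

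The first step is to reduce to a question in arithmetic dynamics. By Theorem~\ref{key-theorem} and the description used in the proof of Theorem~\ref{thm:main2}, $\X^0(\Gamma,G)\cong(U(1)^k)^r/W$ with $r=\rk(H^1(\Gamma,\bZ))$, where $\Out(\Gamma)$ acts through its image $H$ in $\GL(r,\bZ)$ by $\Theta\mapsto\Theta M$ and commutes with the Weyl action of $W$. The hypothesis says every $M\in H$ has a cyclotomic factor in its characteristic polynomial; since that polynomial has degree $r$, only the finitely many $\Phi_m$ with $\varphi(m)\le r$ can occur, so ``$\chi_M$ is divisible by some such $\Phi_m$'' is a finite union of resultant-vanishing conditions, hence Zariski closed in $\GL_r$. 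Consequently every element of the Zariski closure $\overline{H}\le\GL_r$ over $\mathbb{Q}$ also has a root-of-unity eigenvalue on $\mathbb{Q}^r$. The key step would then be the structural lemma: $(\overline{H})^0$ has a nonzero fixed vector in $\mathbb{Q}^r$, equivalently $H$ has a nonzero finite orbit on $\bZ^r$; once this is known, the associated finite orbit in the character lattice of $(U(1)^k)^r$ (which is still finite after saturating by the finite group $W$) furnishes a nonconstant invariant $L^2$ function on the quotient, exactly as in the ``only if'' part of Lemma~\ref{key-lem}. One gets partial traction toward the lemma: for a maximal torus $S\le(\overline{H})^0$, unless some weight of $S$ on $\mathbb{C}^r$ is trivial the set of $s\in S(\mathbb{C})$ with a root-of-unity eigenvalue is a countable union of proper subvarieties, which cannot exhaust $S(\mathbb{C})$; hence $S$ has a zero weight and fixes a nonzero vector.

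The main obstacle is the gap between ``a maximal torus fixes a vector'' and ``$(\overline{H})^0$ fixes a vector'': the structural lemma is \emph{false} for general subgroups of $\GL(r,\bZ)$. The witness is $\mathsf{PGL}_2(\bZ)\hookrightarrow\GL(3,\bZ)$ acting via the adjoint representation on $\mathfrak{sl}_2$, where every element --- elliptic, parabolic, or hyperbolic --- has eigenvalue $1$, yet the representation is $\mathbb{Q}$-irreducible with no invariant line, and the induced action on $T^3=\X^0(\Gamma,U(1))$ is in fact ergodic (every stabilizer of a nonzero vector has infinite index). So a proof must genuinely use that $H$ is the image of $\Out(\Gamma)\to\GL(H^1(\Gamma,\bZ))$ for $\Gamma$ finitely generated nilpotent: its reductive part is essentially a Levi subgroup of the automorphism group of a quotient of a free nilpotent Lie algebra by an invariant ideal, and the crux is whether an ``adjoint-type'' $H$ of the kind above can arise in this way. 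Because one can build nilpotent Lie algebras with prescribed reductive automorphism groups by quotienting free nilpotent Lie algebras by Levi-invariant ideals --- the same circle of ideas behind Dyer's example --- I expect this realizability question, rather than the dynamics, to be where the real difficulty lies, and it is conceivable that it produces a counterexample showing that the hypothesis of the conjecture must be strengthened.
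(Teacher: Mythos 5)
This statement is posed in the paper as an open conjecture --- the paper offers no proof of it, so there is nothing on the paper's side to compare against, and a complete proof here would be a new result rather than a reconstruction. Your ``if'' direction is correct but is not new content: it is exactly Theorem \ref{thm:main2} together with the observation that strong mixing of a single element implies ergodicity of the whole group (an $\Out(\Gamma)$-invariant function is in particular invariant under the cyclic subgroup generated by the mixing element). The substance of the conjecture is entirely the ``only if'' direction, and there your proposal does not close. Your reductions up to that point are sound: the identification $\X^0(\Gamma,G)\cong (U(1)^k)^r/W$, the observation that ergodicity fails precisely when the image $H\leq\GL(r,\bZ)$ of $\Out(\Gamma)$ has a nonzero finite orbit on the character lattice (after $W$-saturation, which preserves finiteness), the Zariski-closedness of ``has a cyclotomic factor'' via finitely many resultant conditions, and the maximal-torus weight argument are all correct.

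The genuine gap is the one you name yourself: the structural lemma ``every element of $H$ has a root-of-unity eigenvalue $\Rightarrow$ $(\overline{H})^0$ fixes a nonzero rational vector'' is false for general $H\leq\GL(r,\bZ)$, and your witness (the adjoint representation of $\GL_2(\bZ)$ on $\mathfrak{sl}_2$, where every element has eigenvalue $1$ yet every nonzero dual orbit is infinite and the action on $T^3$ is ergodic) is correct. So the proof must exploit that $H$ is the image of $\Out(\Gamma)\to\GL(H^1(\Gamma,\bZ))$ for $\Gamma$ finitely generated nilpotent, and you neither establish that such images cannot be of ``adjoint type,'' nor exhibit a nilpotent $\Gamma$ realizing one (which would refute the conjecture). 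Your remark that the reduced torus fixes a vector is genuinely weaker than what is needed --- in the adjoint example the torus fixes the Cartan line while the full group fixes nothing --- so the argument cannot be completed by semisimplicity considerations alone. The proposal is therefore an informative analysis of why the conjecture is open, not a proof; the realizability question you isolate (which reductive groups occur as Levi factors of $\Aut$ of a finitely generated nilpotent group acting on its abelianization) is exactly where the difficulty sits.
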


\section{Exotic Components in the Abelian Case}

In this section we describe the action on the non-identity components in special cases where they can be explicitly computed, as investigated in \cite{ACG}.  Let $p$ be prime and $\bZ_p$ be the cyclic group of order $p$.  We will use the same notation for center of $\SU(p)$ which is realized as the subgroup of scalar matrices with values $p$-th roots of unity.  Let $\Delta(p)$ be the diagonal embedding of $\bZ_p$ in $(\bZ_p)^m=Z(\SU(p)^m)$.  Let $G_{m,p}:=\SU(p)^m/\Delta(p)$.  For example, $G_{1,p}=\mathrm{PU}(p)$.

The main theorems in \cite{ACG} explicitly describe the components in $\Hom(\bZ^r,G_{m,p})$ and $\X(\bZ^r,G_{m,p})$.  In particular, $\X(\bZ^r,G_{m,p})$ consists of the identity component $$\X^0(\bZ^r,G_{m,p})\cong T^r/W\cong (S^1)^{(p-1)rm}/(\Sigma_p)^m$$ where $\Sigma_p$ is the symmetric group on $p$ letters, and $$\frac{p^{(m-1)(r-2)}(p^r-1)(p^{r-1}-1)}{p^2-1}$$ discrete points.

A priori, the only component we can generally say is fixed by $$\Out(\bZ^r)\cong \GL(r,\bZ)$$ is the identity component.  Each isolated point in $\X(\bZ,G_{m,p})$ corresponds to a path-component in $\Hom(\bZ^r,G_{m,p})$; all of which are isomorphic to the homogeneous space $\SU(p)^m/(\bZ_p^{m-1}\times E_p)$ where $E_p\subset \SU(p)$ is isomorphic to the quaternion group $Q_8$ if $p$ is even and the group of triangular $3\times 3$ matrices over the $\bZ_p$, with 1's on the diagonal when $p$ is odd (either way it is ``extra-special'' of order $p^3$).

The action of $\GL(r,\bZ)$ on the character variety lifts to an action on $\Hom(\bZ^r, G_{m,p})$ and so we study this to understand the dynamics.

Pairs of elements $x,y \in \SU(p)^m$ such that the commutator $[x,y] = xyx^{-1}y^{-1}$ belongs to the central group $\Delta(p)\cong \bZ_p$ are called \emph{almost-commuting} elements. An $r$-tuple of pairwise almost-commuting elements $x_1,\dots,x_r\in \SU(p)^m$ commutes in the quotient $G_{m,p}$ and therefore defines a representation of $\bZ^r$.

The two results from \cite{ACG} that we will use are the following:

\begin{lem}[Lemma 8 of \cite{ACG}]\label{ACGLemma8}
  Let $x,y\in \SU(p)^m$ be a pair of almost-commuting elements with $[x,y] = c \neq 1$. Assume $z$ almost-commutes with $x$ and $y$, and let $[x,z]=c^b$ and $[y,z]=c^a$ for integers $0\le a,b <p$. Then, there exists $w\in Z(SU(p)^m)$ such that $z = w x^{-a}y^b$.  
\end{lem}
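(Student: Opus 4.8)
The plan is to work entirely inside $\SU(p)^m$ and use the structure of almost-commuting elements, reducing everything to computations with the central commutator $c\in\Delta(p)\cong\bZ_p$. First I would record the basic calculus of the commutator map $[\cdot,\cdot]\colon \SU(p)^m\times\SU(p)^m\to \SU(p)^m$ modulo the center: since $c=[x,y]$ is central, the map $u\mapsto [x,u]$ restricted to the subgroup of elements almost-commuting with both $x$ and $y$ is a homomorphism into the cyclic group $\langle c\rangle$, and similarly for $u\mapsto [y,u]$; this uses the identity $[x,uv]=[x,u]\,(u[x,v]u^{-1})$ together with the fact that $[x,u],[x,v]$ are central so the conjugation by $u$ disappears. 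The same bilinearity gives, for central $w$, that $[x,w]=[y,w]=1$, so the claimed element $z$ and the element $x^{-a}y^b$ are only required to agree up to $Z(\SU(p)^m)$.

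Next I would verify that $x^{-a}y^b$ has exactly the same commutators with $x$ and $y$ as $z$ does. Using the homomorphism property just established: $[x,x^{-a}y^b]=[x,x^{-a}][x,y^b]=1\cdot [x,y]^b=c^b$, and $[y,x^{-a}y^b]=[y,x^{-a}][y,y^b]=[y,x]^{-a}\cdot 1=([x,y]^{-1})^{-a}=c^{a}$. Hence $z\,(x^{-a}y^b)^{-1}$ commutes with both $x$ and $y$ (again modulo center, the commutators cancel), i.e. $w:=z\,y^{-b}x^{a}$ — or the appropriately ordered product — almost-commutes with $x$ and $y$ and in fact has trivial commutator with each of them. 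The remaining and genuinely substantive step is to show such a $w$ must actually lie in $Z(\SU(p)^m)$.

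The main obstacle is this last point: an element commuting with $x$ and $y$ in $\SU(p)^m$ need not be central in general, so one must use more than the abstract group law — one needs the hypothesis $c\neq 1$, which forces $\langle x,y\rangle$ (inside the quotient, or rather the subgroup they generate in $\SU(p)^m$ modulo center) to be genuinely non-abelian and in fact, after projecting each $\SU(p)$-coordinate, to generate an irreducible subgroup of each relevant $\SU(p)$-factor. Concretely, since $[x,y]$ is a nontrivial scalar in each coordinate where it is nontrivial, in those coordinates $\langle x,y\rangle$ acts irreducibly on $\bC^p$ (a pair of matrices whose commutator is a primitive scalar has no common invariant subspace), so by Schur's lemma anything commuting with both is scalar there; in the coordinates where the commutator is trivial one appeals to the explicit classification of the relevant components in \cite{ACG} (the extra-special structure of $E_p$) to conclude the element is still central. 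Assembling these coordinatewise conclusions shows $w\in Z(\SU(p)^m)$, and then $z=w\,x^{-a}y^b$ as claimed. I expect the Schur's-lemma irreducibility argument to be the crux; the homomorphism-property bookkeeping in the first two paragraphs is routine.
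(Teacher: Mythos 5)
The paper does not prove this statement at all --- it is imported verbatim as Lemma~8 of \cite{ACG} --- so there is no internal proof to compare against. On its own merits your argument is correct and essentially complete. The commutator bookkeeping in your first two paragraphs is right: since all the relevant commutators are central, $u\mapsto[x,u]$ and $u\mapsto[y,u]$ are homomorphisms on the almost-commuting elements, one checks $[x,x^{-a}y^b]=c^b$ and $[y,x^{-a}y^b]=c^a$, and so $w:=z(x^{-a}y^b)^{-1}$ commutes with both $x$ and $y$. Your Schur's-lemma step is also the right crux: if $x_iy_ix_i^{-1}=\zeta y_i$ with $\zeta$ a primitive $p$-th root of unity, then any $y_i$-eigenvector $v$ generates $p$ eigenvectors $x_i^kv$ with distinct eigenvalues $\zeta^{-k}\lambda$, so $\langle x_i,y_i\rangle$ acts irreducibly on $\mathbb{C}^p$ and the centralizer is scalar.

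The one place you overcomplicate is the final case split. Since $\Delta(p)$ is the \emph{diagonal} embedding of $\bZ_p$ in $Z(\SU(p)^m)$, the hypothesis $c\neq 1$ forces $c=(\zeta I,\dots,\zeta I)$ with $\zeta$ a primitive $p$-th root of unity in \emph{every} coordinate ($p$ is prime), so there are no coordinates where the commutator is trivial. The irreducibility/Schur argument therefore applies uniformly, $w$ is scalar in each factor, and $w\in Z(\SU(p)^m)=(\bZ_p)^m$ as required. Dropping the appeal to the classification of components in \cite{ACG} for the ``trivial coordinates'' is not just cosmetic: that classification is downstream of this lemma, so invoking it would risk circularity. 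With that appeal deleted, your proof stands as a clean, self-contained argument.
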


\begin{cor}
  Let $x_1,\dots,x_r \in \SU(p)^m$ be almost-commuting elements, $Z_{ij}:=[x_i,x_j]$ for all $1\leq i,j\leq r$, and denote $Z = (Z_{ij})$ the $r\times r$ matrix of their commutators. Then, the first two rows of $Z$ can take arbitrary values in $\Delta(p)$, and given the two first rows the rest of the matrix $Z$ is uniquely determined.
\end{cor}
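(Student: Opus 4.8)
The plan is to derive both assertions directly from Lemma~\ref{ACGLemma8}. Fix a generator $c_0$ of $\Delta(p)\cong\bZ_p$ and write $Z_{ij}=c_0^{z_{ij}}$ with $z_{ij}\in\bZ/p\bZ$; since $[x_j,x_i]=[x_i,x_j]^{-1}$ and $[x_i,x_i]=1$, the matrix $z=(z_{ij})$ is skew-symmetric with zero diagonal, so ``the first two rows of $Z$'' amounts to the entries $z_{1j}$ $(2\le j\le r)$ and $z_{2j}$ $(3\le j\le r)$. I will treat the main case $Z_{12}\neq 1$ and set $c:=Z_{12}$; this is the case relevant to the classification of exotic components (one has $Z\neq 0$ there, and after reordering generators a nonzero commutator can always be put in the $(1,2)$-slot). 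Note that since $c\neq 1$ and $\bZ/p\bZ$ is a field, every element of $\Delta(p)$ is a power of $c$.

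First I would establish the uniqueness clause. Fix $i\geq 3$. Since $x_i$ almost-commutes with $x_1$ and $x_2$, write $[x_1,x_i]=c^{b_i}$ and $[x_2,x_i]=c^{a_i}$, so that $a_i$ and $b_i$ are read off the first two rows of $Z$; Lemma~\ref{ACGLemma8} then provides $w_i\in Z(\SU(p)^m)$ with $x_i=w_i\,x_1^{-a_i}x_2^{b_i}$. Because $c=[x_1,x_2]$ lies in the center of $\langle x_1,x_2\rangle$, a direct computation gives the Heisenberg identity $[\,x_1^{s}x_2^{t},\,x_1^{s'}x_2^{t'}\,]=c^{\,st'-ts'}$, and applying it (the central factors $w_i$ cancel in any commutator) yields, for all $i,j\geq 3$,
\[
 Z_{ij}=[x_i,x_j]=[\,x_1^{-a_i}x_2^{b_i},\,x_1^{-a_j}x_2^{b_j}\,]=c^{\,a_jb_i-a_ib_j},
\]
an expression in the first two rows alone. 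Putting $i\in\{1,2\}$ in this formula, with $(a_1,b_1)=(-1,0)$ and $(a_2,b_2)=(0,1)$, recovers $Z_{1j}$ and $Z_{2j}$ consistently, so the whole matrix $Z$ is determined by its first two rows.

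Next I would prove the existence clause by an explicit construction. Fix $u,v\in\SU(p)$ with $uv=\zeta vu$, $\zeta=e^{2\pi i/p}$ (e.g.\ the clock and shift matrices, suitably rescaled into $\SU(p)$), embed $\SU(p)$ diagonally into $\SU(p)^m$ so that $\langle\zeta I\rangle$ lands in $\Delta(p)$, and take $c_0:=[u,v]$. Given any prescribed first two rows with $z_{12}\neq 0$, put $x_1:=u$, $x_2:=v^{z_{12}}$, and for $i\geq 3$ put $x_i:=u^{p_i}v^{q_i}$ with $q_i:=z_{1i}$ and $p_i:=-z_{2i}z_{12}^{-1}\bmod p$ (possible since $\bZ/p\bZ$ is a field). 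Using the Heisenberg identity one checks these elements are pairwise almost-commuting and that their commutator matrix has exactly the prescribed first two rows; combined with the previous paragraph, this finishes the proof.

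I do not expect a deep obstacle. The substantive content—that $x_3,\dots,x_r$ lie, modulo the center, in the subgroup $\langle x_1,x_2\rangle$, which is abelian modulo the center—is exactly Lemma~\ref{ACGLemma8}. The only points needing care are the sign conventions in the Heisenberg identity (the ambiguities $w_i$ are harmless, dropping out of every commutator) and the observation that the uniqueness assertion is only meaningful once $Z_{12}\neq 1$, which in the intended application is precisely the normalization being used.
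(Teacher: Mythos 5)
Your derivation is correct, and there is nothing in the paper to compare it against line by line: the corollary is stated with no proof, being treated as an immediate consequence of Lemma~\ref{ACGLemma8} and the main results of \cite{ACG}. The argument you give is surely the intended one. The two ingredients --- that Lemma~\ref{ACGLemma8} places every $x_i$, modulo the center, inside $\langle x_1,x_2\rangle$ once $c=[x_1,x_2]\neq 1$, and the Heisenberg identity $[x_1^{s}x_2^{t},x_1^{s'}x_2^{t'}]=c^{st'-ts'}$ (valid because $c\in\Delta(p)$ is central in all of $\SU(p)^m$) --- correctly yield $Z_{ij}=c^{a_jb_i-a_ib_j}$, the consistency check with $(a_1,b_1)=(-1,0)$, $(a_2,b_2)=(0,1)$ is right, and the clock-and-shift construction realizes any prescribed first two rows having $Z_{12}\neq 1$.

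One point deserves more weight than your closing remark gives it: the restriction to $Z_{12}\neq 1$ is not merely a convenient normalization, because for $r\geq 4$ the corollary as literally stated fails without it. Taking $x_1=x_2=1$ and $x_3=u$, $x_4=v$ a clock-and-shift pair gives trivial first two rows but $Z_{34}\neq 1$, so the first two rows do not determine the rest; and they cannot take arbitrary values either, since prescribing $Z_{12}=1$, $Z_{13}\neq 1$, $Z_{14}=1$, $Z_{24}\neq 1$ is impossible (Lemma~\ref{ACGLemma8} applied to the pair $x_1,x_3$ forces $x_2$ to be a central element times a power of $x_1$, whence $Z_{24}=Z_{14}^{-a}=1$). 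So the hypothesis $[x_1,x_2]\neq 1$ --- equivalently, ``the $x_i$ do not all commute, after the reordering of generators used in the subsequent theorem'' --- is genuinely part of the correct statement rather than a case you are choosing to treat. Since that is exactly the situation in which the corollary is invoked to prove transitivity on the exotic components, your proof covers everything that is needed; just promote the caveat to a stated hypothesis.
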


In order to compute the action of an outer automorphism of $\bZ^r$ on representations, we first prove a general lemma about its action on commutators of almost-commuting elements.

Let $G$ be a group, and let $A_1,\dots,A_r \in G$ such that the commutators $[A_i, A_j] = z_{i,j} \in Z(G)$ are central.  Let $A:=(A_1,...,A_r)$.

If we use additive notation for $Z(G)$ since it is abelian, then the matrix $Z:= [A,A]$, whose entry in position $(i,j)$ is $[A_i,A_j]$, is skew-symmetric.

For $M\in \GL(r,\bZ)$, define $A_i^M = A_1^{M_{i,1}}A_2^{M_{i,2}} \cdots A_r^{M_{i,r}}$, and $A^M = (A_1^M,\dots,A_r^M)$.
\begin{lem}
  \[[A^M,A^M] = M^t Z M\]
  \begin{proof}
    By a direct computation,
    \begin{align*}
      A_i^M A_j^M = A_1^{M_{i,1}}A_2^{M_{i,2}} \cdots A_r^{M_{i,r}} A_1^{M_{j,1}}A_2^{M_{j,2}} \cdots A_r^{M_{j,r}} =\\ \left(\sum_{k=1}^{r-1} M_{i,r}M_{j,k} z_{r,k}\right) A_1^{M_{i,1}}A_2^{M_{i,2}} \cdots A_{r-1}^{M_{i,r-1}} A_1^{M_{j,1}}A_2^{M_{j,2}} \cdots A_r^{M_{j,r}}A_r^{M_{i,r}}\\
      =\cdots\\
      =\left(\sum_{l=1}^{r}\sum_{k=1}^{r} M_{i,l}M_{j,k} z_{l,k}\right) A_j^M A_i^M\\
      =(M^t Z M)_{i,j} A_j^M A_i^M
    \end{align*}
  \end{proof}
\end{lem}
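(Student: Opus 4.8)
The plan is to reduce the identity to the single observation that $H:=\langle A_1,\dots,A_r\rangle$ is nilpotent of class at most $2$: since every $z_{i,j}=[A_i,A_j]$ lies in $Z(G)$, it lies in the center of $H$, so all commutators of elements of $H$ are central. In such a group the commutator pairing is \emph{bi-additive} --- writing $Z(H)$ additively, whenever $[x,y],[x,z],[y,z]$ are central one has the standard class-$2$ identities $[xy,z]=[x,z]+[y,z]$ and $[x,yz]=[x,y]+[x,z]$, and hence by induction $[x^{a},y^{b}]=ab\,[x,y]$. These are the only group-theoretic inputs.

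First I would fix the convention $A_i^M=A_1^{M_{i,1}}A_2^{M_{i,2}}\cdots A_r^{M_{i,r}}$, so that the $i$-th transformed generator is read off from the $i$-th row of $M$. Expanding the iterated commutator using bi-additivity in both slots gives
\[
[A_i^M,\,A_j^M]=\Bigl[\ \prod_{l=1}^{r}A_l^{M_{i,l}},\ \prod_{k=1}^{r}A_k^{M_{j,k}}\ \Bigr]=\sum_{l=1}^{r}\sum_{k=1}^{r}M_{i,l}M_{j,k}\,[A_l,A_k]=\sum_{l,k}M_{i,l}\,z_{l,k}\,M_{j,k}.
\]
The passage to a double sum is legitimate precisely because each partial commutator $[A_l^{M_{i,l}},A_k^{M_{j,k}}]$ is central, so the order in which the pieces are collected does not affect the accumulated element of $Z(H)$. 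Recognizing the double sum as a matrix entry, $\sum_{l,k}M_{i,l}z_{l,k}M_{j,k}=(M\,Z\,M^{\mathsf{t}})_{i,j}$ where $Z=(z_{l,k})$, and since this holds for all $i,j$ we obtain $[A^M,A^M]=M\,Z\,M^{\mathsf{t}}$; skew-symmetry of the right-hand side is automatic from skew-symmetry of $Z$, consistent with the left-hand side being a matrix of commutators. Equivalently --- and this is the route suggested by the computation written just after the statement --- one may avoid the commutator identities entirely: sliding the block $\prod_k A_k^{M_{j,k}}$ leftward past the block $\prod_l A_l^{M_{i,l}}$, each elementary transposition of an $A_l$ past an $A_k$ contributes the central factor $z_{l,k}$, centrality guarantees that the relative orders within the two blocks are unchanged, and what remains is exactly $A_j^M A_i^M$ multiplied by $\sum_{l,k}M_{i,l}M_{j,k}z_{l,k}$.

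The computation is essentially mechanical, so I do not expect a genuine obstacle; the one point requiring care is bookkeeping --- pinning down the row-versus-column convention for $M$ so that the accumulated central element comes out as $M Z M^{\mathsf{t}}$ rather than its transpose, and checking that the centrality hypothesis is actually invoked at every step of the expansion, since it is what makes the bilinear rearrangement valid and independent of the order of collection. With the conventions fixed, either argument above yields the formula directly.
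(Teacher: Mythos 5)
Your proof is correct and takes essentially the same approach as the paper: the paper's own argument is precisely the ``sliding'' computation you describe as your alternative route, moving each $A_k^{M_{j,k}}$ past each $A_l^{M_{i,l}}$ and accumulating the central factors $M_{i,l}M_{j,k}z_{l,k}$, and your primary formulation via bi-additivity of the commutator in a class-$2$ group is just the packaged version of that same bookkeeping. Both arguments land on $[A_i^M,A_j^M]=(MZM^{\mathsf{t}})_{i,j}$, matching the last line of the paper's computation (the expression $A^tZA$ in the displayed statement of the lemma is evidently a typo for $MZM^{\mathsf{t}}$).
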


Now let us assume that the center $Z(G)$ is a finite abelian group. Notice that the action of $\GL(r,\bZ)$ on the skew-symmetric matrix $Z$ above is the usual action on skew-symmetric bilinear forms. Therefore, for any such matrix $Z$ with values in $Z(G)$, there exists
$M\in \GL(r,\bZ)$ such that $M^t Z M$ has the standard form:
\[M^t Z M = \begin{pmatrix}
  0 & a_1 & 0 & 0 & \cdots & 0\\
  -a_1 & 0 & 0 & 0 & \cdots & 0\\
  0 & 0 & 0 & a_2 & \cdots & 0 \\
  0 & 0 & -a_2 & 0 & \cdots & 0\\
  \vdots & \vdots & \vdots &\vdots & \ddots & \vdots\\
  0 & 0  & 0 & 0 & \cdots & 0\\
\end{pmatrix},\] where $a_i\in Z(G)$ for $1\leq i\leq \lfloor r/2\rfloor$.

For the specific case of $G=G_{m,p}$, the center is $\bZ_p$ for some prime $p$.

\begin{theorem}
  Suppose $r\ge 3$. Then, $\Out(\bZ^r) = \GL(r,\bZ)$ acts transitively on the exotic components
  of $\X(\bZ^r,G_{m,p})$.
\end{theorem}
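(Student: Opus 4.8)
The plan is to combine the explicit $\GL(r,\bZ)$-action on commutator matrices computed above with the description of the exotic components from \cite{ACG}, reducing transitivity to two steps: first moving an arbitrary exotic component to one with a standard commutator matrix, and then showing the residual stabilizer acts transitively on the exotic components sharing that commutator matrix. We work with components of $\Hom(\bZ^r,G_{m,p})$, each exotic one being a single conjugacy class. To an exotic component $C$ attach the skew-symmetric matrix $Z(C):=\bigl([x_i,x_j]\bigr)$, where $x_1,\dots,x_r\in\SU(p)^m$ are almost-commuting lifts of a representation in $C$ and entries are read in $\Delta(p)\cong\bZ_p$. Since the lifting ambiguity $x_i\mapsto\delta_ix_i$ ($\delta_i\in\Delta(p)$) and conjugation leave commutators unchanged, $Z(C)$ is a well-defined invariant of $C$; by Lemma~\ref{ACGLemma8} and its corollary it has rank $2$, and it is nonzero precisely because $C$ is not the identity component. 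By the commutator computation above (the lemma giving $[A^M,A^M]$ in terms of $Z$), if $F\in\Out(\bZ^r)$ acts on $\bZ^r$ by $M\in\GL(r,\bZ)$ then $\widehat F$ carries a component with commutator matrix $Z$ to one with commutator matrix $MZM^{t}$; that is, $C\mapsto Z(C)$ is $\GL(r,\bZ)$-equivariant for the standard action on skew-symmetric bilinear forms.

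\emph{Step 1 (normalizing the commutator matrix, where $r\ge 3$ is used).} By the standard-form reduction recalled above, every rank-$2$ skew-symmetric $\bZ_p$-valued form on $\bZ^r$ is $\GL(r,\bZ_p)$-equivalent to the fixed form $Z_0$ with matrix $\left(\begin{smallmatrix}0&1\\-1&0\end{smallmatrix}\right)\oplus 0_{r-2}$ (symplectic basis theorem). The image of $\GL(r,\bZ)$ in $\GL(r,\bZ_p)$ is $\{g:\det g\equiv\pm1\}$; this subgroup still acts transitively on rank-$2$ forms when $r\ge 3$, because if $g^{*}Z=Z_0$ but $\det g\not\equiv\pm1$, one replaces $g$ by $gh$ with $h$ in the stabilizer of $Z$ and $\det h$ adjusting $\det(gh)$ to $\pm1$ — possible since that stabilizer contains a block $\GL(r-2,\bZ_p)$, hence elements of every determinant. (This is the one place $r\ge 3$ is essential: for $r=2$ the stabilizer is $\SL(2,\bZ_p)$ and $\GL(2,\bZ)$ can only rescale the form by $\pm1$, so transitivity genuinely fails.) Hence every exotic component lies in the $\GL(r,\bZ)$-orbit of one with commutator matrix $Z_0$.

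\emph{Step 2 (the residual action, and the main obstacle).} Let $P\le\GL(r,\bZ)$ be the stabilizer of $Z_0$; by equivariance and Step 1 it suffices to prove $P$ acts transitively on the set $\mathcal{C}_0$ of exotic components with commutator matrix $Z_0$. By \cite{ACG}, such a component has a representative whose lift may be taken with $x_1,x_2$ generating a fixed copy of the extra-special group $E_p$ in each of the $m$ factors (with $[x_1,x_2]$ a chosen generator of $\Delta(p)$) and with $x_3,\dots,x_r\in Z(\SU(p)^m)$; modulo the lifting ambiguity this identifies $\mathcal{C}_0$ with $\bigl(Z(\SU(p)^m)/\Delta(p)\bigr)^{r-2}\cong\bZ_p^{(m-1)(r-2)}$, matching the count from \cite{ACG}. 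Reducing modulo $p$, any $M\in P$ has the block-triangular shape $\left(\begin{smallmatrix}A&B\\0&D\end{smallmatrix}\right)$ with $A\in\SL(2,\bZ_p)$, $B\in M_{2\times(r-2)}(\bZ_p)$, $D\in\GL(r-2,\bZ_p)$, and one must unwind $x_i^{M}=\prod_k x_k^{M_{ik}}$ — using that $x_1^{pk},x_2^{pk}\in\Delta(p)$ and that the central corrections contributed by the block $B$ can be absorbed by conjugation in $G_{m,p}$ — in order to identify the induced $P$-action on $\bZ_p^{(m-1)(r-2)}$ and show it is transitive. This last point is the crux: the subtleties are bookkeeping which modifications of a lift come from conjugation in $G_{m,p}$ rather than from the $\GL(r,\bZ)$-action, and understanding how each of the symplectic ($A$), unipotent ($B$) and linear ($D$) parts of $P$ moves the central data — which is exactly where the finer structure of \cite{ACG} (notably the description of each exotic component as the homogeneous space $\SU(p)^m/(\bZ_p^{m-1}\times E_p)$) enters.
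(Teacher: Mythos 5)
Your Step 1 reproduces the paper's argument: normalize the commutator matrix to the standard rank-two form, using the extra freedom in the complementary block to adjust the determinant so that the congruence lifts from $\GL(r,\bZ_p)$ to $\GL(r,\bZ)$ (the paper does this with the diagonal matrix carrying $a_1^{-1}$ and $a_1$ in opposite corners); this is also where $r\ge 3$ is used in both arguments. The divergence is your Step 2, and it is where your proposal is incomplete: the paper has no Step 2 at all, because it quotes \cite{ACG} as saying that an exotic component is \emph{uniquely determined} by the first two rows of $Z$, so after normalizing the form there is nothing left to move and transitivity follows at once. Under that reading your set $\mathcal{C}_0$ is a single component. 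You instead assert that $\mathcal{C}_0\cong\bZ_p^{(m-1)(r-2)}$, announce that one must prove the stabilizer $P$ of $Z_0$ acts transitively on it, call this ``the crux'' --- and then do not prove it. As written, your argument therefore does not establish the theorem.

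That said, the issue you raise in Step 2 is substantive and you should not wave it away. The paper itself records that there are $p^{(m-1)(r-2)}(p^r-1)(p^{r-1}-1)/(p^2-1)$ exotic components, whereas the number of rank-two alternating $\bZ_p$-valued forms on $\bZ^r$ is $(p^r-1)(p^{r-1}-1)/(p^2-1)$; for $m\ge 2$ these differ by exactly your factor $p^{(m-1)(r-2)}$, so the commutator matrix cannot determine the component and the uniqueness claim the paper relies on can only be correct for $m=1$. Worse, the transitivity you hope to prove in Step 2 appears to fail: the stabilizer $P$ reduces mod $p$ to block matrices $\left(\begin{smallmatrix}A&B\\0&D\end{smallmatrix}\right)$ whose lower-left block vanishes mod $p$, and since $x_1^p,x_2^p\in\Delta(p)$ the induced action on the central data $(\rho(e_3),\dots,\rho(e_r))\in(\bZ_p^{m-1})^{r-2}$ factors through $D$ alone. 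The subgroup of $Z(G_{m,p})$ generated by these central values (intrinsically, the image under $\rho$ of the radical of the commutator form) is then a $\GL(r,\bZ)$-invariant which is not constant across exotic components when $m\ge 2$; for instance, for $r=3$, $p=2$, $m=2$ (so $G_{2,2}\cong\mathsf{SO}(4)$) the $14$ exotic components appear to split into two orbits of $7$ according to whether $\rho(e_3)$ is trivial after normalization. So the correct assessment is: your proof is incomplete at exactly the step you flagged; the paper's proof is complete only granting a uniqueness statement that the component count contradicts for $m\ge 2$; and the theorem as stated seems to require $m=1$, with the orbit structure for $m\ge 2$ needing to be reexamined along the lines of your Step 2.
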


  \begin{proof}
    The main results of \cite{ACG} show that the exotic components are uniquely determined by the first
    two rows of the matrix of commutators $Z$, and that these two rows can take any possible values
    in $\bZ_p$.

    Let $x_1,\dots,x_r\in \SU(p)^m$ be almost-commuting elements with matrix of commutators $Z$, and assume that they do not all commute. By the remarks above, up to the action of $\GL(r,\bZ)$ we may assume that the first two rows of $Z$ are
    \[\begin{pmatrix}
      0 & a_1 & 0 & 0 & \cdots & 0\\
      -a_1 & 0 & 0 & 0 & \cdots & 0\\
    \end{pmatrix},\]
    where $a_1\neq 0$.

    By Lemma \ref{ACGLemma8}, if those are the first two rows of the matrix of commutators, then every other entry of the matrix $Z$ must be $0$.

    Define
    \[M = \begin{pmatrix}
      a_1^{-1} & 0 &\cdots & 0 & 0\\
      0 & 1 & \cdots & 0 & 0\\
      \vdots & \vdots & \ddots & \vdots & \vdots\\
      0 & 0 & \cdots & 1 & 0\\
      0 & 0 & \cdots & 0 & a_1\\
    \end{pmatrix} \in \GL(r,\bZ_p),\]
    and let $\hat{M} \in \GL(r,\bZ)$ be a lift of $M$. Then, the first two rows of $\hat{M}^t Z \hat{M}$ are
    \[\begin{pmatrix}
      0 & 1 & 0 & 0 & \cdots & 0\\
      -1 & 0 & 0 & 0 & \cdots & 0\\
    \end{pmatrix},\]
    and so every exotic component is in the orbit of the one determined by this matrix of commutators, and the action of $\GL(n,\bZ)$ is transitive.

  \end{proof}

\begin{remark}
  In characteristic 2, the classification of skew-symmetric bilinear forms is different than in odd characteristic. However, in our setting, we know that the diagonal of the matrix $Z$ is $0$ since a group element always commutes with itself. This implies that the normal form for $Z$ given above applies to all characteristics.
\end{remark}

The following proposition describes the case $r=2$.
\begin{prop}
  If $p > 2$, the group $\Out(\bZ^2) = \GL(2,\bZ)$ permutes the exotic components
  $\X(\bZ^r,G_{m,p})$ in pairs. If $p=2$, it fixes every component.
  \begin{proof}
    In this case, there is only one non-trivial commutator $c \in \bZ_p$, and the action of an automorphism
    $M\in \GL(2,\bZ)$ maps $c$ to $\det(M)c$. Since the only invertible elements in $\bZ$ are $\pm 1$, the
    orbit of the exotic component with commutator $c$ is made of the two components with commutators $\pm c$, which are distinct when $p>2$ but equal when $p=2$.
  \end{proof}
\end{prop}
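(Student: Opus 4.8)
The plan is to reduce the statement to a single linear-algebra fact about $2\times 2$ skew-symmetric forms together with the rank-$2$ case of the classification in \cite{ACG}. The key point is that for $r=2$ the only invariant distinguishing the exotic components is one commutator $c\in\bZ_p$, and the $\GL(2,\bZ)$-action on that invariant is multiplication by the determinant.

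First I would recall from the main results of \cite{ACG} that when $r=2$ the exotic components of $\X(\bZ^2,G_{m,p})$ are in bijection with the nonzero values of $c:=[x_1,x_2]\in\bZ_p$, where $x_1,x_2\in\SU(p)^m$ is any almost-commuting pair representing a point of the component; this is consistent with the count $\frac{p^{(m-1)(r-2)}(p^r-1)(p^{r-1}-1)}{p^2-1}$, which equals $p-1$ at $r=2$. I would also note that this invariant $c$ is locally constant on $\Hom(\bZ^2,G_{m,p})$ and that the $\GL(2,\bZ)=\Out(\bZ^2)$-action on $\X(\bZ^2,G_{m,p})$ lifts to $\Hom(\bZ^2,G_{m,p})$ and merely permutes the path components, so it suffices to track how $c$ transforms. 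Next, applying the lemma on the action on commutators of almost-commuting elements with $r=2$: if $(x_1,x_2)$ has matrix of commutators $Z=\begin{pmatrix}0&c\\-c&0\end{pmatrix}$, then $(x_1^M,x_2^M)$ has matrix of commutators $MZM^t$. A one-line computation (or the standard identity $MJM^t=\det(M)J$ for the $2\times 2$ symplectic form $J$) gives $MZM^t=\det(M)\begin{pmatrix}0&c\\-c&0\end{pmatrix}$, so $M$ sends the component indexed by $c$ to the one indexed by $\det(M)\,c$.

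I would then conclude: since every element of $\GL(2,\bZ)$ has determinant $\pm 1$, and determinant $-1$ elements do occur (e.g.\ a coordinate swap), the $\GL(2,\bZ)$-orbit of the exotic component indexed by $c$ is exactly $\{c,-c\}$. If $p>2$ and $c\neq 0$ then $2c\neq 0$ in $\bZ_p$, so $c\neq -c$ and the orbit is a genuine pair of distinct components; hence $\Out(\bZ^2)$ permutes the exotic components in pairs. If $p=2$ then $-c=c$, so every component is fixed.

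The step I would be most careful about is the equivariance claim extracted from \cite{ACG}: one must know that the bijection "exotic component $\leftrightarrow$ value of $c$" is compatible with the $\GL(2,\bZ)$-action, i.e.\ that pushing a representation forward by $M$ really moves it into the component whose invariant is $\det(M)c$, rather than merely changing the invariant of one chosen representative. This is exactly what is needed and is immediate once one knows $c$ is a continuous complete invariant of the component (which is the content of \cite{ACG}), since a homeomorphism must send a whole path component onto a single path component, and that target component is pinned down by the transformed value of $c$. Everything else is routine.
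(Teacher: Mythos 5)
Your proposal is correct and follows essentially the same route as the paper's proof: identify each exotic component by its single commutator invariant $c\in\bZ_p$, observe that $M\in\GL(2,\bZ)$ transforms $c$ into $\det(M)\,c=\pm c$ via the skew-symmetric form computation, and conclude the orbits are the pairs $\{c,-c\}$, which collapse when $p=2$. You simply make explicit some points the paper leaves implicit (the count $p-1$, the equivariance of the bijection with components, and the existence of determinant $-1$ elements), all of which are sound.
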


\section{Mixing flows associated to outer automorphisms}
In \cite{F-G}, Forni and Goldman define a bundle $\mathbb{U}\mathfrak{M}(S)$ over the unit sphere bundle of the Riemann moduli space associated to a surface $S$, with fibers the character variety $\X(\pi_1(S),G)$ into a Lie group $G$. They define a flow on $\mathbb{U}\mathfrak{M}(S)$ lifting the Teichmüller geodesic flow in order to create a continuous version of the mapping class group dynamics on character varieties of surfaces, and prove that this flow is ergodic.

By analogy with this construction, we consider the flow of a $1$-parameter subgroup on $\SL(n,\bR)/\SL(n,\bZ)$. The group $\SL(n,\bZ)$ is a lattice in $\SL(n,\bR)$. We can therefore use Moore's ergodicity theorem \cite{Moore}:

\begin{theorem}[Moore's ergodicity theorem]\label{thm:Moore}
  Let $\Gamma\subset G$ be an irreducible lattice. If $H\subset G$ is a closed noncompact subgroup, then $H$ acts ergodically on $G/\Gamma$ $($with respect to the Haar measure$).$
\end{theorem}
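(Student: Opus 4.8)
The plan is to reduce ergodicity of the $H$-action to the decay of matrix coefficients and then invoke the Howe--Moore theorem. Normalize the $G$-invariant measure on $G/\Gamma$ to a probability measure $\mu$, which is possible exactly because $\Gamma$ is a lattice. A measure-preserving $H$-action is ergodic if and only if every $H$-invariant function in $L^2(G/\Gamma,\mu)$ is $\mu$-almost everywhere constant. Write $L^2(G/\Gamma)=\mathbb{C}\cdot\mathbf 1\oplus L^2_0(G/\Gamma)$, with $L^2_0$ the orthogonal complement of the constants, and let $\pi$ be the unitary representation of $G$ on $L^2_0(G/\Gamma)$ by right translation. Since an $H$-invariant $L^2$ function splits as a constant plus an $H$-invariant vector in $L^2_0$, it suffices to show that $\pi$ has no nonzero $H$-invariant vector.

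The crucial feature of $\pi$ is that it has \emph{no} nonzero $G$-invariant vector: a $G$-invariant $L^2$ function on the homogeneous space $G/\Gamma$ is constant, and constants are removed in passing to $L^2_0$. For $G=\SL(n,\bR)$ --- connected, simple, with finite center --- the Howe--Moore theorem then yields that every matrix coefficient $g\mapsto\langle\pi(g)v,w\rangle$ of $\pi$ vanishes as $g\to\infty$ in $G$. For a general semisimple $G$ with $\Gamma$ irreducible, one first uses irreducibility (together with a Mautner-type argument) to exclude vectors fixed by any single simple factor, and then applies the product form of Howe--Moore to obtain the same vanishing.

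Granting this, the conclusion is immediate. Let $v\in L^2_0(G/\Gamma)$ be $H$-invariant with $\norm v=1$. Because $H$ is closed and noncompact, choose $h_k\in H$ escaping every compact subset of $G$, so $h_k\to\infty$. Then $\langle\pi(h_k)v,v\rangle=\langle v,v\rangle=1$ for all $k$, contradicting $\langle\pi(h_k)v,v\rangle\to 0$. Hence $\pi$ has no nonzero $H$-invariant vector, and the $H$-action on $G/\Gamma$ is ergodic.

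The real content lies in the Howe--Moore theorem, whose proof uses the Cartan ($KAK$) decomposition to reduce matrix-coefficient decay to decay along a Weyl chamber in the split torus, and then combines the Mautner phenomenon with the representation theory of the $\SL(2,\bR)$-subgroups attached to the restricted roots. Reproducing this is out of scope for a short note, so in practice one simply cites Howe--Moore (as presented, e.g., in Zimmer's monograph); likewise one cites the standard fact that an irreducible lattice in a semisimple group without compact factors produces a representation on $L^2_0$ with no invariant vector under any noncompact factor. With those two ingredients imported, the only thing that needs writing is the short argument above.
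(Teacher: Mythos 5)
The paper does not prove this statement at all: it is Moore's ergodicity theorem, quoted as a known result and applied only in the case $G=\SL(n,\bR)$, $\Gamma=\SL(n,\bZ)$. Your derivation from Howe--Moore is the standard proof and is correct: the reduction of ergodicity to the absence of nonzero $H$-invariant vectors in $L^2_0(G/\Gamma)$, the observation that constants are the only $G$-invariant $L^2$ functions, and the contradiction $\langle\pi(h_k)v,v\rangle=1\not\to 0$ for $h_k\to\infty$ in the closed noncompact $H$ are all sound. The only place requiring care is the general semisimple case: there one must assume $G$ has no compact factors (or discard them), and the exclusion of vectors fixed by a single noncompact simple factor $G_1$ uses irreducibility of $\Gamma$ via density of the projection of $\Gamma$ to $G/G_1$, so that a $G_1$-invariant function in $L^2_0$ is forced to be $G$-invariant and hence zero; you gesture at this correctly, and in any case the application in the paper needs only the simple-group version. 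Citing Howe--Moore rather than reproving it is exactly what the paper itself does implicitly, so nothing is missing.
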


Let $\phi_t$ denote the flow associated to any noncompact $1$-parameter subgroup of $\SL(n,\bR)$. Then, by Theorem \ref{thm:Moore} the $\phi_t$ action is ergodic on $\SL(n,\bR)/\SL(n,\bZ)$.

Let $K$ be a compact connected Lie group and consider the associated space
\[ F_K := (\SL(n,\bR) \times \X^0(\bZ^n,K))/\SL(n,\bZ),\]
where $\SL(n,\bZ)$ acts diagonally by right multiplication on the first factor and by outer automorphisms on the second. Then, $F_K$ is a flat bundle over the quotient $\SL(n,\bR)/\SL(n,\bZ)$ with fiber $\X^0(\bZ^n,K)$.

We lift the flow $\phi_t$ to a flow $\widehat{\phi}_t$ on $F_K$ which is trivial on the second factor, and equal to $\phi_t$ on the first. This flow is a continuous analog of the outer automorphism action on the character variety $\X^0(\bZ^n,K)$.

Following \cite{F-G} exactly we have:

\begin{theorem}\label{thm:phitstrongmixing}
  The action of $\widehat{\phi}_t$ on $F_K$ is strong mixing.
  \end{theorem}
  
  \begin{proof}
    The product of the Haar measure on $\SL(n,\bR)$ with the Lebesgue measure defined in Section \ref{sec:abelian} on $\X^0(\bZ^n,K)$ defines a $\widehat{\phi}_t$ invariant measure on $\SL(n,\bR) \times \X^0(\bZ^n,K)$, which passes down to an invariant probability measure on the quotient $F_K$.

    The $\phi_t \times \SL(n,\bZ)$-action on $\SL(n,\bR)$ is ergodic. If we act by $\phi_t \times \SL(n,\bZ)$ on $\X^0(\bZ^n,K)$ with the first factor acting trivially, we obtain a strongly mixing action by Theorem \ref{thm:main}. The \emph{multiplier criterion} \cite{glasnerweissmixing} for weak mixing states the action of a group on a product $X\times Y$ of probability spaces is ergodic if the action on $X$ is ergodic and the action on $Y$ is weakly mixing. We deduce that the $\widehat{\phi}_t \times \SL(n,\bZ)$-action on $\SL(n,\bR) \times \X^0(\bZ^n,K)$ is ergodic, and hence that the $\widehat{\phi}_t$ action on $F_K$ is ergodic.
    
    Now, the flow $\widehat{\phi}_t$ extends to an action of the whole Lie group $\SL(n,\bR)$. As a corollary of the Howe-Moore theorem on vanishing of matrix coefficients \cite{HoMo}, any measure-preserving ergodic action of $\SL(n,\bR)$ on a probability space is strongly mixing and restricts to a strongly mixing action on any noncompact closed subgroup (see for instance \cite{BekkaMayer}). We conclude that the flow $\widehat{\phi}_t$ is strongly mixing.
    
  \end{proof}

  More generally, this construction applies to the setting of Example \ref{ex:SilbermanLU}. Let $P = LU < G$ be the Levi decomposition of a parabolic subgroup of a reductive algebraic group $G$ defined over $\bZ$. Suppose moreover that the identity component of $L$ has no non-trivial $\mathbb{Q}$-characters (in particular, if $L$ is semisimple).
  By the Borel-Harish-Chandra theorem \cite{BorelHarishChandra}, the $\bZ$-points  $L_\bZ \subset L$ form an arithmetic lattice, and as stated in the example $L_\bZ$ acts by automorphisms on $U_\bZ$ via conjugation. Given a choice of noncompact $1$-parameter subgroup $H\subset L_\bR$, we can therefore associate a flow $\widehat{\phi}^P_t$ on the space
  \[F^P_K = (L_\bR \times \X^0(U_\bZ,K))/L_\bZ.\]
  Assuming that $L_\bZ$ is irreducible and contains a hyperbolic automorphism of $U_\bZ$, the same proof as Theorem \ref{thm:phitstrongmixing} shows:
  \begin{theorem}
    The action of $\widehat{\phi}^P_t$ on $F^P_K$ is strongly mixing.
  \end{theorem}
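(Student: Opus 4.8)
The plan is to realize $(F_K,\widehat{\phi}_t)$ as a piece of a homogeneous flow and apply the Howe--Moore vanishing-of-matrix-coefficients theorem, which upgrades the ergodicity argument of Forni--Goldman to strong mixing. First I would record the basic structure. Since $\SL(n,\bR)$ is unimodular, $\SL(n,\bZ)$ preserves the Haar probability measure on $\SL(n,\bR)/\SL(n,\bZ)$; and as shown in the proof of Theorem \ref{thm:main2}, $\GL(n,\bZ)=\Out(\bZ^n)$ preserves the $W$-invariant Lebesgue measure on the fiber $\X^0(\bZ^n,K)\cong (U(1)^k)^n/W$. Hence the product measure descends to a finite $\widehat{\phi}_t$-invariant measure $\mu$ on $F_K$, which we normalize to a probability measure. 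Next, observe that left multiplication by $\SL(n,\bR)$ on the first factor of $\SL(n,\bR)\times\X^0(\bZ^n,K)$ commutes with the diagonal right $\SL(n,\bZ)$-action defining $F_K$, so it descends to a measure-preserving action of $\SL(n,\bR)$ on $(F_K,\mu)$ whose restriction to the given noncompact $1$-parameter subgroup $\{a_t\}$ is exactly $\widehat{\phi}_t$. This yields a strongly continuous unitary representation $\pi$ of $\SL(n,\bR)$ on $L^2(F_K,\mu)$.

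The key step is to show that $L^2_0(F_K,\mu)$, the orthogonal complement of the constants, has no nonzero $\pi(\SL(n,\bR))$-invariant vector. A $\pi$-invariant vector is (a class of) a function $F(g,x)$ on $\SL(n,\bR)\times\X^0(\bZ^n,K)$ that is invariant under left translation in $g$ --- hence depends only on $x$ --- and which descends to $F_K$, hence is $\SL(n,\bZ)$-invariant as a function of $x\in\X^0(\bZ^n,K)$. Now $\SL(n,\bZ)$ contains hyperbolic elements (automorphisms of $\bZ^n$ with no root-of-unity eigenvalue), so by Theorem \ref{thm:main2} it acts ergodically --- indeed strongly mixing --- on $\X^0(\bZ^n,K)$; therefore $F$ is constant and thus zero in $L^2_0(F_K,\mu)$.

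With no invariant vectors in $L^2_0(F_K,\mu)$, the Howe--Moore theorem (the vanishing-of-matrix-coefficients strengthening of Moore's Theorem \ref{thm:Moore}, valid because $\SL(n,\bR)$ is a connected simple Lie group with finite center) gives that every matrix coefficient of $\pi|_{L^2_0}$ tends to $0$ along any sequence leaving all compact subsets of $\SL(n,\bR)$. A noncompact $1$-parameter subgroup has $a_t\to\infty$ in $\SL(n,\bR)$ as $t\to\infty$, so $\langle\widehat{\phi}_t f',g'\rangle\to 0$ for all $f',g'\in L^2_0(F_K,\mu)$. Writing a general $f\in L^2(F_K,\mu)$ as $f=\big(\int f\,d\mu\big)\cdot 1+f'$ with $f'\in L^2_0$, and similarly for $g$, and using that $\widehat{\phi}_t$ fixes the constants, one gets $\langle\widehat{\phi}_t f,g\rangle\to\big(\int f\,d\mu\big)\overline{\big(\int g\,d\mu\big)}$; specializing $f,g$ to indicator functions of measurable sets is precisely the definition of strong mixing. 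The only genuine obstacle is the invariant-vector computation of the middle paragraph, which is where the full strength of Theorem \ref{thm:main2} (ergodicity of $\Out(\bZ^n)$ on the fiber) is used; everything else is the standard homogeneous-dynamics package, with the one caveat that Moore's ergodicity theorem as stated must be replaced by its Howe--Moore mixing refinement in order to reach the mixing conclusion rather than mere ergodicity.
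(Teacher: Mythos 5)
Your proof is correct, and it reaches the conclusion by the same overall architecture as the paper (extend $\widehat{\phi}_t$ to an $\SL(n,\bR)$-action on $F_K$, then invoke Howe--Moore to upgrade ergodicity to strong mixing and restrict to the noncompact one-parameter subgroup), but the intermediate ergodicity step is handled differently. The paper establishes ergodicity of the skew product by combining Moore's ergodicity theorem (for the $\phi_t$-action on $\SL(n,\bR)/\SL(n,\bZ)$) with the Glasner--Weiss multiplier criterion (ergodic $\times$ weakly mixing $=$ ergodic), feeding in the strong mixing of $\SL(n,\bZ)$ on the fiber from Theorem \ref{thm:main}. You instead compute the $\SL(n,\bR)$-invariant vectors in $L^2_0(F_K,\mu)$ directly: transitivity of left translation forces such a function to depend only on the fiber coordinate, and $\SL(n,\bZ)$-invariance plus ergodicity of $\SL(n,\bZ)$ on $\X^0(\bZ^n,K)$ forces it to be constant. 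This bypasses both Moore's theorem and the multiplier criterion, at the modest cost of a routine Fubini argument to pass from ``invariant under each left translation a.e.'' to ``a.e.\ independent of $g$,'' which you state but do not spell out. Both arguments share the same implicit point, worth making explicit either way: one needs $\SL(n,\bZ)$ (not merely $\GL(n,\bZ)=\Out(\bZ^n)$) to contain an element with no root-of-unity eigenvalue, which holds for all $n\ge 2$ (e.g.\ the companion matrix of $x^n-x-1$).
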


  With the above definitions, we have constructed a flow which reflects the dynamical properties of action of $\SL(n,\bZ)$ by outer automorphisms on $\bZ^n$, and more generally of the action of $L_\bZ$ on $U_\bZ$. A natural invariant of a flow is its entropy, and so we ask:
  \begin{question}
    Given a choice of 1-parameter flow $\phi_t$, are there natural bounds for the entropy of the flow $\widehat{\phi}_t$?
  \end{question}

\end{document}